\def\0D{\Delta^{(0)}}
\def\1D{\Delta^{(1)}}
\newtheorem{theorem}{Theorem}[section]
\newtheorem{remark}[theorem]{Remark}
\newtheorem{proposition}[theorem]{Proposition}
\newtheorem{lemma}[theorem]{Lemma}
\newtheorem{corollary}[theorem]{Corollary}
\newtheorem{example}[theorem]{Example}
\newtheorem{definition}[theorem]{Definition}
\def\build#1_#2^#3{\mathrel{
\mathop{\kern 0pt#1}\limits_{#2}^{#3}}}
\numberwithin{equation}{section}
\def\a{\alpha}
\def\b{\beta}
\def\c{\chi}
\def\ot{\otimes}
\def\part{\partial}
\def\ra{\rightarrow}
\def\text{\hbox}
\def\ot{\otimes}
\def\ra{\rightarrow}
\def\Hom{\mathop{\rm Hom}\nolimits}
\def\Id{\mathop{\rm Id}\nolimits}
\def\build#1_#2^#3{\mathrel{
\mathop{\kern 0pt#1}\limits_{#2}^{#3}}}
\numberwithin{equation}{section}
\newcommand{\comment}[1]{\relax}
\begin{document}
%\Large
\title{On Antipodes Of Hom-Hopf algebras}
\author {Mohammad Hassanzadeh}
\date{University of Windsor\\
Windsor, Ontario, Canada\\
mhassan@uwindsor.ca
}
\maketitle
\begin{abstract}
In the recent definition of  Hom-Hopf algebras  the antipode $S$ is the relative Hom-inverse of the  identity map with respect to the convolution product.
%rather than to be the regular inverse.
We observe  that some fundamental properties of the antipode of Hopf algebras and Hom-Hopf algebras, with the original definition, do not hold generally  in the new setting.
We show that  the antipode is a relative Hom-anti algebra and a relative anti-coalgebra morphism. It is also  relative Hom-unital, and  relative Hom-counital. Furthermore if the twisting maps of multiplications and comultiplications are invertible then $S$ is an anti-algebra and an anti-coalgebra map.
We show that any Hom-bialgebra map between two Hom-Hopf algebras is  a relative Hom-morphism of  Hom-Hopf alegbras. Specially if the corresponding  twisting maps are all invertible then it is a Hom-Hopf algebra  map.
If the Hom-Hopf algebra is commutative or cocommutative we observe  that $S^2$ is equal to the identity map in some sense.  At the end we study the images of  primitive and group-like elements under the antipode.

\end{abstract}

%%%%%%%%%%%%%%%%%%%%%%%%%%%%%%%%%%%%%%%%%%%%%%%%%%%%%%%%%%%%%%%%%%%%%%%%%%%%%%%%%%%%%%%%%%%%%%%%%%%%%%%%%%%%%%%%%%%%%%%%%%%%%%%%%%%%%%%%%%%%%%%%%%%%%%%%%%%%%%%%%%%%%%%%%%%%%%%%%%%%%%%%%%%%%%%%%%%%%%%%%%%%%%%%%%%%%%%%%%%%%%%%%%%%%%%%%%%%%%%%%%%%%%%%%
\section{ Introduction}

The examples  of Hom-Lie algebras were first appeared in $q$-deformations of
algebras of vector fields, such as Witt and Virasoro algebras \cite{as}, \cite{ckl}, \cite{cz}. The concept of Hom-Lie algebras generalizes the one for Lie algebras where the Jocobi identity is twisted by a homomorphism \cite{hls}, \cite{ls}.
Hom-associative algebras were introduced  and studied in \cite{ms1}. Moreover Hom-coalgebras and Hom-bialgebras were studied in \cite{ms2}, \cite{ms3}, \cite{ya2}, \cite{ya3}, \cite{ya4}. In the last years, many  classical algebraic concepts have
been extended to the framework of Hom-structures. For examples see \cite{hls}, \cite{gw}, \cite{pss}, \cite{hss}, \cite{aem}, \cite{gmmp}, \cite{gr}, \cite{cq}, \cite{cs}, \cite{zz}.
The Hom-Hopf algebras first introduced in \cite{ms2} and \cite{ms3}. In these works they defined a Hom-Hopf algebra $H$ to be a Hom-bialgebra $(H, \mu, \a, \eta, \Delta, \b, \varepsilon)$, endowed with a map $S: H\longrightarrow H$, where it is the inverse of the identity map $\Id_H$ with respect to the convolution product $\star$, i.e,

$$ S \star \Id = \Id \star S= \eta\circ \varepsilon.$$

 This definition of antipode is the same as the one for Hopf algebras. The universal enveloping algebra of a Hom-Lie algebra introduced in \cite{ya4}.  It has been shown that it is a Hom-bialgebra. However
it is  not  a Hom-Hopf algebra in the sense of \cite{ms2}, since it is shown in \cite{lmt} that  the antipode is not an inverse of the identity map with respect to the  the convolution product. This was a motivation to change definition of the antipode  such that a Hom-Hopf algebra is a Hom-bialgebra which  satisfies a weakened condition. For every $h\in H$ there exists $k\in \mathbb{N}$ satisfying the weakened condition
$$  \a^k(S \star \Id )(h)= \a^k(\Id \star S)(h)= \eta\circ \varepsilon(h). $$ This naturally suggests to change the definition of invertible elements of a Hom-algebra $A$ as being elements $a\in A$ such that there exists $b\in A$  and $k \in \mathbb{N}$ where $ \a^k( ab) = \a^k(ba) =1_A.$
This means   the antipode is the relative Hom-inverse of the identity map.
 In this paper we study this  recent notion of Hom-Hopf algebras. More precisely by Definition \ref{Hom-Hopf}  a Hom-Hopf algebra in the new setting is a Hom-bialgebra endowed with a unital, counital, anti-algebra and anti-coalgebra  map $S: H\longrightarrow H$ which is relative Hom-inverse of  the identity map $\Id_H$, and it commutes with $\a$.
 The Hom-Hopf algebras in  Examples \ref{Sweedler} and \ref{2-dimensional Hopf} satisfy the conditions of both definitions.
   The set of group-like elements and primitive elements are important to study Hopf type objects.  The group-like elements gives a relation between Hom-Hopf algebras and Hom-groups  while primitive elements connects Hom-Hopf algebras to Hom-Lie algebras.
The authors in \cite{lmt} showed that the set of group-like elements in  a Hom-Hopf algebra is a Hom-group where the inverse elements are given by the antipode. In Example \ref{polynomial}, we introduce a Hom-bialgebra containing a group-like element which does not have any inverse. Therefore it does not  have any antipode or Hom-Hopf algebra structure.
 The main aim  of this paper is to find out  if one removes the important conditions unitality, counitality, anti-algebra map, anti coalgebra map, and $S\circ \a=\a\circ S$, from  Definition \ref{Hom-Hopf},
 and only sticks with the relative Hom-invertibility condition of $S$, then how much of these properties can be recovered and what are the  other properties of the antipode.
 To investigate this, we consider a Hom-bialgebra endowed with a map $S$  which is a  relative Hom-inverse of the identity map.
  First we need to find out the relations between relative Hom-inverse elements with respect to the convolution product in Proposition \ref{important}. In Propositions \ref{relative anti algebra} and \ref{relative anti coalgebra}, we show that  the antipode is a relative Hom-anti-algebra and a relative Hom-anti-coalgebra morphism.
 it is also shown in  Propositions \ref{hom unitality prop} and \ref{relative counitality} that  the antipode is  relative Hom-unital and  relative Hom-counital.
Furthermore if the twisting maps $\a$ and $\b$  are invertible then $S$ is an anti-algebra and an anti-coalgebra map.
Then in Proposition \ref{Hopf map} we prove that any Hom-bialgebra map between two Hom-Hopf algebras is  a relative Hom-morphism of  Hom-Hopf alegbras.  By Corollary \ref{hopf morphism}, if the corresponding  twisting maps are all invertible then it is a Hom-Hopf algebra  map. Furthermore we observe that if $\a=\b$ then $S$ commutes with powers of $\a$. Later we study $S^2$ for  commutative  and  cocommutative Hom-Hopf algebras. In these cases we prove that $S^2$ is equal to the identity map in some sense. If $\a$ and $\b$ are invertible then $S^2=\Id$. At the end we study the images of  primitive and group-like elements under the antipode.

%%%%%%%%%%%%%%%%%%%%%%%%%%
\bigskip

\textbf{Notations}:In this paper all (Hom)-algebras, (Hom)-colagebras, (Hom)-bialgebras and (Hom)-Hopf algebras are defined on a field $\mathbb{K}$.
All tensor products $\ot$ are on a field $\mathbb{K}$. We denote the set of natural numbers by $\mathbb{N}$.

%%%%%%%%%%%%%%%%%%%%%%%%%%%%%%%%%%%%%%%%%%%%%%%%%%%%%%%%%%%%%%%%%%%%%%%%%%%%%%%%%%%%%%%%%%%%%%%%%%%%%%%%%%%%%%%%%%%%%%%%%%%%%%%%%%%%%%%%%%%%%%%%%%%%%%%%%%%%%%%%%%%%%%%%%%%%%%%
%%%%%%%%%%%%%%%%%%%%%%%%%%%%%%%%%%%%%%%%%%%%%%%%%%%%%%%%%%%%%%%%%%%%%%%%%%%%%%%%%%%%%%%%%%%%%%%%%%%%%%%%%%%%%%%%%%%%%%%%%%%%%%%%%%%%%%%%%%%%%%%%%%%%%%%%%%%%%%%%%%%%%%%%%%%%%%%%
\tableofcontents
%%%%%%%%%%%%%%%%%%%%%%%%%%%%%%%
\section{ Hom-Hopf algebras}

In this section we recall the basics of Hom-algebras, Hom-coalgebras, Hom-bialgebras and Hom-Hopf algebras. To understand these structures we introduce some  examples.

   By \cite{ms1}, a Hom-associative algebra  $A$ over a field $\mathbb{K}$ is a $\mathbb{K}$-vector space with a bilinear map $m: A\ot A\longrightarrow A$, called multiplication, and  a linear homomorphism
  $\alpha: A\longrightarrow A$ satisfying the Hom-associativity  condition
  $$m \circ (m \ot \alpha)= m \circ ( \alpha \ot m).$$  In terms of elements $a,b,c\in A$, this can be written as $\a(a)(bc) = (ab)\a(c)$.
  The Hom-associativity property in terms of a commutative diagram is

 $$
\xymatrix{
A\ot A\ot A \ar[r]^{m\ot \a} \ar[d]_{\a\ot m} & A\ot A \ar[d]^{m} \\
A \otimes A\ar[r]^{m} & A }
\hspace{30pt}
\xymatrix{ } $$

A Hom-associative algebra $A$ is called unital if there exists a linear map $\eta: k\longrightarrow A$ where $\a \circ \eta=\eta$, and
$$m \circ (\Id \ot \eta)= m \circ ( \eta\ot\Id) =\a.$$ The unit element of $A$ is $\eta(1_k)=1_A$.
These conditions in terms of an element $a\in A$ can be written as $\a(1_A)=1$ and $a 1_A= 1_A a =\a(a)$.
The unitality condition in terms of a commutative diagram is

  $$
  \xymatrix{
A \ar[r]^{\eta\ot Id} \ar[rd]_{\a} &
A\ot A \ar[d]^{m} &
A \ar[l]_{\eta\ot Id} \ar[ld]^{\a}\\
&A}
$$

In many examples $\a$ is   an algebra map, i.e,    $\a(xy)= \a(x) \a(y)$ for all $x, y\in A$.
When $\a=\Id$, then we obtain the definition of  associative algebras.

 \begin{example}{\rm
Let $A$ be an algebra with multiplication  $m: A\ot A\longrightarrow A$, and $\a: A \longrightarrow A$ be an algebra map. We twist the multiplication of $A$ by $\a$ to obtain a new multiplication $m_{\a}(x,y)= m( \a(x), \a(y))$. Then $(A,  m_{\a}, \a)$ is a Hom-algebra.}
\end{example}

 \begin{example}\label{2d}
{\rm This example is a special case of the last example in \cite{ms2}.
In this example we define a $2$-dimensional Hom-algebra $A$ with a basis $B=\{ e_1, e_2\}$. We define the multiplication by

$$m(e_1, e_1)=e_1, ~~~~~m(e_1, e_2)=m(e_2, e_1)=e_2, ~~~~~m(e_2, e_2)=e_2.$$

 We set $\a(e_1)= 2e_1-e_2$ and $\a(e_2)=e_2$. This Hom-algebra is unital and commutative with the unit element  $\eta(1)=e_1$.

}
 \end{example}

 An element $x$ in an unital Hom-associative algebra $ (A, \a)$ is called Hom-invertible \cite{lmt},  if there exists an element $x^{-1}$  and a non-negative integer $k\in \mathbb{N}$  such that
$$\a^k(x x^{-1}) = \a^k(x^{-1} x)= 1.$$ The element $x^{-1}$ is called a Hom-inverse and the smallest $k$ is the invertibility index of $x$.
The Hom-inverse may not be unique if it exists. However the authors in \cite{lmt} showed that the unit element $1_A$ is Hom-invertible, the
product of any two Hom-invertible elements is Hom-invertible and every inverse of a Hom-invertible element
is Hom-invertible.

For two Hom-algebras  $(A,\mu ,\alpha )$ and $(A^{\prime },\mu ^{\prime },\alpha ^{\prime })$
a linear map $f:A\rightarrow A^{\prime }$
is called  a  Hom-algebra morphism if
$$f(xy)= f(x) f(y), ~~~~~\text{and} ~~~   f(\a(x))= \a'(f(x)), ~~~\forall x, y\in A.$$

Now we recall  the dual notion of a Hom-algebra which is called a Hom-coalgebra \cite{ms2}, \cite{ms3}. A Hom-coalgebra is a triple $(A, \Delta, \b)$, where $C$ is a $\mathbb{K}$-vector space, $\Delta: C\longrightarrow C\ot C$ is linear map, called comultiplication,  and $\b: C\longrightarrow C $ a linear map satisfying the Hom-coassociativity condition,

 $$(\Delta\ot \b ) \circ   \Delta  =  ( \b \ot \Delta) \circ   \Delta  .$$

If we use the Sweedler notation $ \Delta (c) = c^{(1)} \ot c^{(2)}$, then the coassociativity condition can be written as
$$ \b( c^{(1)}) \ot c^{(2)(1)} \ot c^{(2)(2)}  = c^{(1)(1)} \ot c^{(1)(2)}\ot \b(c^{(2)}).$$

The coassociativity property in terms of a commutative diagram is the dual of the one for the Hom-associativity of Hom-algebras as follows;
$$
\xymatrix{
C\ot C\ot C   & C\ot C\ar[l]_{\Delta\ot \b}  \\
C \otimes C \ar[u]^{\b\ot \Delta}& C\ar[l]_{\Delta}\ar[u]_{\Delta} }
\hspace{30pt}
\xymatrix{ } $$

A Hom-coassociative coalgebra is said to be counital if there exists a linear map $\varepsilon: C \longrightarrow \mathbb{K}$ where
$$(\Id\ot \varepsilon ) \circ   \Delta  =  ( \varepsilon \ot \Id) \circ   \Delta =\b .$$
This means $$c^{(1)} \varepsilon (c^{(2)})= \varepsilon (c^{(1)})c^{(2)} = \b(c).$$

Furthermore the map $\b$ is counital, i.e,  $\varepsilon (\b(c))= \varepsilon (c)$.
The counitality condition in terms of a commutative diagram is
 $$
  \xymatrix{
C \ar[r]^{\Delta} \ar[rd]_{\b} &
C\ot C \ar[d]^{\varepsilon \ot \Id}_{\varepsilon \ot \Id} &
C \ar[l]_{\Delta} \ar[ld]^{\b}\\
&C}
$$
Moreover if the map $\b$ is a coalgebra map then we have $ \Delta \circ \b= (\b \ot \b)\circ \Delta$.

\begin{example}\label{2dco}{\rm
This example is a special case of the last example in \cite{ms2}. The Hom-algebra introduced in Example \ref{2d} is a Hom-coalgebra by
\begin{align*}
  &\Delta(e_1)=e_1\ot e_1, ~~~~~~ \Delta(e_2)= e_1\ot e_2 + e_2\ot e_1 -2 e_2\ot e_2\\
  &\varepsilon(e_1)=1, ~~~~~~~~~~~~~~~ \varepsilon(e_2)=0.
\end{align*}
We set $\b(e_1)= e_1+e_2$ and $\b(e_2)= e_2$.
}

\end{example}

Let $(C, \Delta, \varepsilon, \b)$ and $(C', \Delta', \varepsilon', \b')$ be two Hom-coalgebras. A morphism $f: C\longrightarrow C'$ is called a Hom-coalgebra map if for all $x\in C$ we have $$ f(x)^{(1)}\ot f(x)^{(2)}=  f(x^{(1)})\ot f(x^{(2)}), ~~~~~ f\circ \b= \b' \circ f. $$

A $(\a, \b)$-Hom-bialgebra is a tuple $(B, m, \eta, \a,  \Delta, \varepsilon, \b)$ where $(B, m, \eta, \a)$ is a Hom-algebra and $(B, \varepsilon, \Delta, \b)$ is a Hom-coalgebra where $\Delta$ and $\varepsilon$ are morphisms of Hom-algebras, that is\\

i) $\Delta$ is a Hom-algebra map,  $\Delta (hk)= \Delta(h) \Delta(k)$, which is

$$(hk)^{(1)}\otimes (hk)^{(2)}= h^{(1)}k^{(1)}\otimes h^{(2)}k^{(2)}, \quad \forall~~ h,k\in B,$$

ii) $\Delta$ is unital; $\Delta(1)= 1\otimes 1.$

iii) $\varepsilon$ is a Hom-algebra map; $\varepsilon(xy)= \varepsilon(x) \varepsilon (y)$.

iv)  $\varepsilon$ is unital; $\varepsilon(1)=1$.

v) $\varepsilon (\a(x))= \varepsilon(x)$.

The algebra map property of $\Delta$  in terms of commutative diagrams is
\[ \underset{ { \text{{\bf{ }}}}} {
\xymatrixcolsep{5pc}\xymatrix{
B\ot B \ar[r]^-{\Delta m} \ar[d]_-{\Delta \ot \Delta} &B\ot B  \\
B\ot B\ot B\ot B  \ar[r]_-{id \ot \tau \ot id}   & B\ot B \ot B\ot B \ar[u]_-{m \ot m} }
 }
\] Here the linear map $\tau:B\ot B \ra B\ot B$ is given by $\tau(h\ot k)=k\ot h$.

The map $\epsilon$ being an algebra morphism  in terms of a commutative diagram   means

\[ \underset{ { \text{{\bf{  }}}}} {
\xymatrixcolsep{5pc}\xymatrix{
B \ot B \ar[r]^-{m}  \ar[d]_-{\epsilon \ot \epsilon}&B\ \ar[ld]^-{\epsilon}\\
k\ot k =k }}
 \]

\begin{remark}{\rm
  It can be proved that $\Delta$ and $\varepsilon$ are morphisms of unital Hom-algebras if and only if $m$ and $\eta$ are morphism of Hom-coalgebras.}
\end{remark}

\begin{example}\label{2dbi}{\rm

The $2$-dimensional Hom-algebra in Example \ref{2d} is a $(\a, \b)$-Hom-bialgebra by the coalgebra structure given in Example \ref{2dco}. See \cite{ms2}.

}

\end{example}

\begin{example}\label{twisted Hom-bialgebra}
  {\rm
  Let $ (B, m, \eta, \Delta, \varepsilon)$ be a bialgebra and $\a: B\longrightarrow B$ be a bialgebra map. Then $ (B, m_{\a}=\a\circ m, \a, \eta, \Delta_{\a}=\Delta\circ \a,  \varepsilon, \a)$ is a $(\a, \a)$-Hom-bialgebra.
  }
\end{example}

 Let $(B, m, \eta, \a,  \Delta, \varepsilon, \b)$ and $(B', m', \eta', \a',  \Delta', \varepsilon', \b')$ be two Hom-bialgebras. A morphism $f: B\longrightarrow B'$ is called a map of Hom-bialgebras of it is both morphisms of Hom-algebras and Hom-coalgebras.
  Let $(B, m, \eta, \a,  \Delta, \varepsilon, \b)$ be a Hom-algebra. The authors in \cite{ms2}, \cite{ms3}, showed that  $(\Hom(B, B), \star, \gamma)$ is an unital  Hom-algebra with $\star$ is  the convolution product
  $$f \star g = m \circ (f\ot g) \circ \Delta,$$
 and $ \gamma \in \Hom(B, B)$ is defined by $\gamma(f)= \a \circ f \circ \b.$ The unit is $\gamma \circ \varepsilon$.
Similarly if  $(A, m, \eta, \a)$  and $(C, \varepsilon, \Delta, \b)$   are   a Hom-algebra and a   Hom-coalgebra, respectively, then $(\Hom(C, A), \star, \gamma)$ is an unital  Hom-algebra where  $\star$ is  the convolution product.\\

Here we recall the original definition of Hom-Hopf algebras.

\begin{remark}\label{old hom-hopf}{\rm
  The notion of Hom-Hopf algebras first was appeared in   \cite{ms2} and \cite{ms3} as follows. A $(\a, \b)$-Hom-bialgebra  $(H, m, \eta, \a,  \Delta, \varepsilon, \b)$  with an antipode $S:H\longrightarrow H$ is called a $(\a, \b)$-Hom-Hopf algebra. A map $S$ is called antipode if it is an inverse  of the identity map  $\Id: H\longrightarrow H$
in  the Hom-associative algebra $\Hom(H,H) $ with respect to the multiplication given by the convolution product, i.e. $S \star \Id = \Id \star S= \eta \circ \varepsilon$. In fact for all $h\in H$ we have
$$S(h^{(1)}) h^{(2)}= h^{(1)} S(h^{(2)})= \varepsilon(h) 1.$$
This is the same as  usual definition of an antipode for Hopf algebras. The following properties of antipode of Hom-Hopf algebras with this definition were proved in \cite{cg} and \cite{ms2}. For all $x, y\in H$ we have;\\

i) If $\a=\b$ then $S\circ \a=\a\circ S$.

ii) The antipode $S$ of a Hom-Hopf algebra is unique.

iii) $S$ is anti-algebra map, i.e, $S(xy)= S(y) S(x)$.

iv) $S$ is anti-coalgebra map, i.e, $S(x)^{(1)}\ot S(x)^{(2)}= S(x^{(2)})\ot S(x^{(1)})$.

v) $S$ is unital, i.e, $S(1)=1$.

vi) $S$ is counital, i.e, $ \varepsilon(S(x))= \varepsilon(x)$.
}
\end{remark}

In this paper we use the recent  notion of Hom-Hopf algebras introduced in \cite{lmt}.
\begin{definition}\label{Hom-Hopf}\cite{lmt}
  Let $(B, m, \eta, \a,  \Delta, \varepsilon, \b)$ be a $(\a, \b)$-Hom-bialgebra. An anti-algebra,  anti-coalgebra morphism  $S: B\longrightarrow B$  is
said to be an antipode if\\

a) $S\circ \a= \a \circ S$.

b) $S \circ \eta = \eta$ and $\varepsilon \circ S= \varepsilon$.

c) $S$ is a relative Hom-inverse of the identity map $\Id : B \longrightarrow B$  for the convolution product, i.e, for any $x\in B$, there exists $k\in \mathbb{N}$ such that

\begin{equation}
  \a^k \circ (S\ot \Id)\circ\Delta(x) = \a^k \circ (\Id \ot S)\circ\Delta(x)= \eta \circ \varepsilon (x).
\end{equation}
A $(\a, \b)$-Hom-bialgebra with an antipode is called a $(\a, \b)$-Hom-Hopf algebra.
\end{definition}

One notes that  Definition \ref{Hom-Hopf}(c) in terms of Sweedler notation can be written as follows:

\begin{equation}
  \a^k(S(x^{(1)}) x^{(2)}) = \a^k (x^{(1)} S(x^{(2)}))= \varepsilon(x)1_B.
\end{equation}

\begin{remark}{\rm

There are some   differences between the old definition  of Hom-Hopf algebras in Remark \ref{old hom-hopf}  and the recent one in Definition \ref{Hom-Hopf}.
   The    Definition \ref{Hom-Hopf}(a)  in the special case of $\a=\b$ is followed by the definition of  Hom-Hopf algebras in Remark \ref{old hom-hopf}(i).
  Also   Definition \ref{Hom-Hopf}(b) is the result of  the old definition in Remark \ref{old hom-hopf}(v)(vi).
Furthermore  the antipodes of Hom-Hopf algebras in Definition \ref{Hom-Hopf} are the relative Hom-inverse of the  identity map whereas the antipode in Remark \ref{old hom-hopf} is actually the inverse of the identity map. Finally the antipode in Remark \ref{old hom-hopf}  is unique however the antipode in Definition
 \ref{Hom-Hopf} is not necessarily unique. In fact the authors in \cite{lmt} proved that if $S$ and $S'$ are two antipodes for the Hom-Hopf algebra $H$ in the sense of Definition \ref{Hom-Hopf}, then for every $x\in H$ there exist $k\in \mathbb{N}$ where
 $$\a^{k+2} \circ S \circ \b^2(x) = \a^{k+2} \circ S' \circ \b^2(x).$$
 In special case when $\a$ and $\b$ are both invertible then $S=S'$ and the antipode is unique.}
\end{remark}

\begin{proposition}
  Any Hom-Hopf algebra $(H, m, \eta, \a,  \Delta, \varepsilon, \b, S)$  in the sense of Remark \ref{old hom-hopf} which satisfies  the extra condition $S\circ \a= \a \circ S$, is a Hom-Hopf algebra in the sense of Definition \ref{Hom-Hopf}  where  $k=1$ for all elements $x\in H$.
\end{proposition}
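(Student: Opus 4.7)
The plan is to verify that each clause in Definition \ref{Hom-Hopf}, together with the anti-algebra and anti-coalgebra morphism requirements, follows directly from the properties collected in Remark \ref{old hom-hopf} combined with the extra hypothesis $S\circ\a=\a\circ S$. This is really a checklist rather than a genuine theorem: most clauses appear verbatim among the listed properties, and only clause (c) requires a small computation.

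First I would note that the anti-algebra property of $S$, namely $S(xy)=S(y)S(x)$, is exactly Remark \ref{old hom-hopf}(iii), and the anti-coalgebra property $S(x)^{(1)}\otimes S(x)^{(2)}=S(x^{(2)})\otimes S(x^{(1)})$ is Remark \ref{old hom-hopf}(iv), so the preliminary requirement of Definition \ref{Hom-Hopf} that $S$ be an anti-algebra, anti-coalgebra morphism is automatic. Next, clause (a) of Definition \ref{Hom-Hopf} is nothing other than our additional hypothesis. Clause (b), namely $S\circ\eta=\eta$ (that is, $S(1_H)=1_H$) and $\varepsilon\circ S=\varepsilon$, is precisely Remark \ref{old hom-hopf}(v) and (vi).

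The only nontrivial point is clause (c), the relative Hom-inverse condition with the specific exponent $k=1$. In the old setting one has the strict identity
\[
S(x^{(1)})\,x^{(2)} \;=\; x^{(1)}\,S(x^{(2)}) \;=\; \varepsilon(x)\,1_H
\]
for every $x\in H$, coming from $S\star\Id=\Id\star S=\eta\circ\varepsilon$. Applying $\a$ to both sides and using the unitality axiom $\a(1_H)=1_H$ valid in any unital Hom-associative algebra, one gets
\[
\a\!\bigl(S(x^{(1)})\,x^{(2)}\bigr) \;=\; \a\!\bigl(x^{(1)}\,S(x^{(2)})\bigr) \;=\; \varepsilon(x)\,\a(1_H) \;=\; \varepsilon(x)\,1_H,
\]
which is exactly the Sweedler form of Definition \ref{Hom-Hopf}(c) with $k=1$ uniformly in $x$.

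The main obstacle is no real obstacle: everything reduces to reading off the appropriate item of Remark \ref{old hom-hopf}, except for the single line where one applies $\a$ and invokes $\a(1_H)=1_H$. The only thing to be careful about is the mild ambiguity in the displayed formula of Definition \ref{Hom-Hopf}(c), where $\a^k$ must be understood as being applied \emph{after} the convolution product $m\circ(S\otimes\Id)\circ\Delta$; once that is read correctly, the computation above gives the result.
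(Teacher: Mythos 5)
Your proof is correct and follows essentially the same route as the paper's, which is simply to read off the unital, counital, anti-algebra and anti-coalgebra properties from Remark \ref{old hom-hopf} and observe that the strict identity $S(x^{(1)})x^{(2)}=x^{(1)}S(x^{(2)})=\varepsilon(x)1$ yields Definition \ref{Hom-Hopf}(c) with $k=1$. You are in fact slightly more careful than the paper, which omits the one-line computation $\a(\varepsilon(x)1)=\varepsilon(x)\a(1)=\varepsilon(x)1$ justifying the exponent $k=1$.
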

\begin{proof}
  By Remark \ref{old hom-hopf}  the antipode $S$ is an unital,  counital,   anti-algebra,  and anti-coalgebra map. Therefore for $k=1$ it satisfies all the conditions of Hom-Hopf algebras in Definition \ref{Hom-Hopf}.
\end{proof}

\begin{corollary}
  If for Hom-Hopf algebra $(H, m, \eta, \a,  \Delta, \varepsilon, \b, S)$ in the sense of Remark \ref{old hom-hopf} satisfies  $\a=\b$ then $H$ is a Hom-Hopf algebra in the sense of Definition \ref{Hom-Hopf}.
\end{corollary}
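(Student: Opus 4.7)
The plan is to reduce this corollary directly to the preceding proposition. The preceding proposition already establishes that any Hom-Hopf algebra in the sense of Remark \ref{old hom-hopf} which additionally satisfies the compatibility condition $S\circ \a = \a \circ S$ is automatically a Hom-Hopf algebra in the sense of Definition \ref{Hom-Hopf} (with invertibility index $k=1$ for every element). Therefore the entire task is to verify that the hypothesis $\a=\b$ forces the commutation relation $S\circ \a = \a \circ S$.

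First, I would invoke Remark \ref{old hom-hopf}(i), which records precisely the fact that for a Hom-Hopf algebra in the original sense, the equality $\a = \b$ implies $S\circ \a = \a \circ S$. This is a standard result in the literature \cite{cg}, \cite{ms2}, so no fresh argument is needed; it can be quoted directly.

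Second, having obtained $S \circ \a = \a \circ S$, I would apply the previous proposition with this extra condition in hand. The proposition guarantees that all defining axioms of Definition \ref{Hom-Hopf} are satisfied: conditions (a) and (b) come from items (i), (v), (vi) of Remark \ref{old hom-hopf}, while condition (c) holds with $k=1$ because the genuine convolution-inverse identity $S \star \Id = \Id \star S = \eta \circ \varepsilon$ of the old definition is a strictly stronger version of the relative Hom-inverse equation (applying $\a^1$ to both sides preserves the equality). The anti-algebra and anti-coalgebra properties required by Definition \ref{Hom-Hopf} follow from items (iii) and (iv) of Remark \ref{old hom-hopf}.

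There is essentially no obstacle here: the corollary is a formal specialization of the proposition, the only new ingredient being the implication $\a=\b \Rightarrow S\circ \a = \a \circ S$, which is cited rather than re-proved. The resulting proof should be only one or two lines, consisting of an application of Remark \ref{old hom-hopf}(i) followed by a reference to the preceding proposition.
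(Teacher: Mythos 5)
Your proposal is correct and follows the same route as the paper: cite Remark \ref{old hom-hopf}(i) to get $S\circ\a=\a\circ S$ from $\a=\b$, then apply the preceding proposition. The paper's own proof is exactly this two-step reduction, so no further comment is needed.
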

\begin{proof}
  If $\a=\b$ then $\a$ is a map of Hom-bialgebras and  by Remark \ref{old hom-hopf} we have  $\a \circ S= S\circ \a$.  Now  the result is followed by  the previous Proposition.
\end{proof}
A Hom-Hopf algebra is called commutative if it is commutative as Hom-algebra and it is called cocommutative if it cocommutative as Hom-coalgebra.

\begin{example}\label{HH}
{\rm
Let $(H, m, \eta, \a,  \Delta, \varepsilon, \b, S)$  and Let $(K, m', \eta', \a',  \Delta', \varepsilon', \b', S')$ be two Hom-Hopf algebras. Then
$H\ot K$ is also a Hom-Hopf algebra by multiplication $m\ot m$, unit $\eta\ot \eta'$, and $\a\ot \a': H\ot H \longrightarrow H\ot H$, the coproduct $\Delta\ot \Delta'$ and counit $\varepsilon \ot \varepsilon'$ and the linear map $\b\ot \b': H\ot H \longrightarrow H\ot H$.
  }
\end{example}

\begin{definition}
 Let  $(H, m, \eta, \a,  \Delta, \varepsilon, \b, S)$ be a $(\a, \b)$-Hom-Hopf algebra. An element $h\in H$ is called a group-like element if $\Delta(h)=h\ot h$ and $\b(h)=h$.
\end{definition}

\begin{remark}{\rm
  If $h\in H$ is a group-like element then $ \varepsilon(h)h=\b(h)=h$. Therefore $\varepsilon(h)=1$.}
\end{remark}
One notes that the authors in \cite{lmt} introduced group-like elements with condition $\varepsilon(h)=1$ which in fact implies $\b(h)=h$. Therefore their definition is equivalent to the one in this paper. However we preferred to have $\b(h)=h$ as definition and similar as ordinary Hopf algebras the condition $\varepsilon(h)=1$ is the  result of the fact that $h$ is a group-like element. The notion of Hom-groups  introduced in \cite{lmt} and studied in \cite{hassan}.
 For any Hom-group  $(G, \a)$, the author in \cite{hassan} introduce  the Hom-group algebra $\mathbb{K}G$.

\begin{proposition}
  For any Hom-group $(G, \a) $, the Hom-group algebra $\mathbb{K}G$ is a $(\a, \Id)$-Hom-Hopf algebra.
\end{proposition}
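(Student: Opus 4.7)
The plan is to extend the Hom-group structure linearly to obtain the Hom-algebra structure on $\mathbb{K}G$, equip it with the standard ``grouplike'' coalgebra structure, and define the antipode by extending the Hom-inverse map.

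First, I would define the structure maps on the basis and extend by linearity. Let $m(g\ot h) = gh$, $\eta(1_{\mathbb{K}}) = 1_G$, and let the first twisting map be the linear extension of $\a: G\longrightarrow G$, denoted again by $\a$. The Hom-associativity axiom $\a(g)(hk)=(gh)\a(k)$ in $G$ immediately extends bilinearly to make $(\mathbb{K}G, m, \eta, \a)$ a unital Hom-associative algebra. For the coalgebra structure I take $\Delta(g)=g\ot g$ and $\varepsilon(g)=1$ for every $g\in G$, with second twisting map $\b=\Id$. Since $\b=\Id$, the Hom-coassociativity $(\Delta\ot\b)\Delta=(\b\ot\Delta)\Delta$ reduces to the ordinary coassociativity, which is obvious for grouplike coproducts, and counitality is immediate.

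Next I verify the Hom-bialgebra axioms. For any $g,h\in G$,
\begin{equation*}
\Delta(gh)=gh\ot gh=(g\ot g)(h\ot h)=\Delta(g)\Delta(h),
\end{equation*}
and $\Delta(1_G)=1_G\ot 1_G$; likewise $\varepsilon(gh)=1=\varepsilon(g)\varepsilon(h)$ and $\varepsilon(1_G)=1$. The condition $\varepsilon\circ\a=\varepsilon$ follows because $\a(g)\in G$ so $\varepsilon(\a(g))=1=\varepsilon(g)$. Thus $(\mathbb{K}G,m,\eta,\a,\Delta,\varepsilon,\Id)$ is a $(\a,\Id)$-Hom-bialgebra.

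For the antipode, pick for each $g\in G$ a Hom-inverse $g^{-1}\in G$ (existence is guaranteed by the Hom-group axioms of \cite{lmt,hassan}) and set $S(g)=g^{-1}$, extended linearly. Using the Hom-group identities $S(1_G)=1_G$, $\a(g^{-1})=\a(g)^{-1}$ and $(gh)^{-1}=h^{-1}g^{-1}$ proved in \cite{lmt,hassan}, one checks successively that $S$ is unital, counital, commutes with $\a$, is an anti-algebra map (by linearly extending $(gh)^{-1}=h^{-1}g^{-1}$) and is anti-coalgebra (since $\Delta(S(g))=g^{-1}\ot g^{-1}=(S\ot S)\circ\tau\circ\Delta(g)$). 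Finally, the relative Hom-inverse condition is precisely the defining relation of a Hom-inverse in $G$: for each $g$ there is $k\in\mathbb{N}$ with
\begin{equation*}
\a^k\bigl(S(g)\, g\bigr)=\a^k(g^{-1}g)=1_G=\a^k(gg^{-1})=\a^k\bigl(g\, S(g)\bigr)=\varepsilon(g)\,1_G,
\end{equation*}
and the Sweedler version $\a^k\circ(S\ot\Id)\circ\Delta=\eta\circ\varepsilon$ on a general element is obtained by taking $k$ to be the maximum of the individual $k_g$'s for the finitely many grouplike terms appearing. This is the main point requiring care: a single $k$ must serve the whole (finite) linear combination, which is harmless because the condition in Definition~\ref{Hom-Hopf} is stated element-by-element and sums of finitely many elements of $\mathbb{K}G$ involve only finitely many group elements. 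All the axioms of Definition~\ref{Hom-Hopf} are therefore satisfied, so $\mathbb{K}G$ is a $(\a,\Id)$-Hom-Hopf algebra.
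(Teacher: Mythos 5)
Your proposal is correct and follows essentially the same route as the paper: grouplike coproduct $\Delta(g)=g\ot g$, counit $\varepsilon(g)=1$, $\b=\Id$, and antipode $S(g)=g^{-1}$, with the relative Hom-invertibility condition reducing to the defining Hom-inverse relation in $G$. You are in fact slightly more thorough than the paper, which only checks the axioms on group elements and does not explicitly address the anti-(co)algebra properties of $S$ or the need for a single $k$ serving a whole finite linear combination.
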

\begin{proof}

We define the coproduct by    $\Delta (g)= g\ot g$, counit by $\varepsilon(g)=1$, and the  antipode by $S(g)=g^{-1}$.
  Since $\b=\Id$ one  verifies that $\mathbb{K}G$ is a  $(\a, \Id)$-Hom-bialgebra.
    Since for any Hom-group we have  $\a(g)^{-1}= \a(g^{-1})$ then $ S(\a(g))=\a(S(g))$. The unit element of $\mathbb{K}G$ is $1_G$ and therefore  $S(1_G)=1_G^{-1}= 1_G$. Furthermore $\varepsilon (S(g))= \varepsilon (g^{-1})=1$. Finally if the invertibility index of $g\in G$ is $k$ then
    $$\a^k( S(g) g)= \a^k(g^{-1} g)=1.$$
\end{proof}

  One notes that all elements $g\in \mathbb{K}G$ are group-like elements. Also $\mathbb{K}G$ is a cocommutative Hom-Hopf algebra. If $G$ is an abelian Hom-group then $\mathbb{K}G$ is a commutative Hom-Hopf algebra.
The authors in \cite{lmt}, proved that  set of group-like elements of a Hom-Hopf algebra is a Hom-group.
In the Hom-bialgebra structure of $\mathbb{K}G$ one can define the comultiplication by $\Delta(g) = \a(g)\ot \a(g)$ to obtain a $(\a, \a)$-Hom-bialgebra.

\begin{example}\label{polynomial}
  {\rm (Hom-bialgebra of quantum matrices)

  In this example we study a $4$-dimensional Hom-bialgebra which is not a Hom-Hopf algebra. First we recall the construction of quantum matrices from \cite{es}, \cite{k}, \cite{m}, \cite{s}.
  Let $q\in \mathbb{K}$ where $q\neq 0$ and $q^2\neq -1$.
  Let $\mathcal{O}_q(M_2)= \mathbb{K}[ a, b, c, d]$ be the  polynomial algebra with variables $a,b,c,d$  satisfying the following relations

  \begin{align*}
    &ab = q^{-1} ba, ~~~~~~ bd= q^{-1}db, ~~~~ac = q^{-1}ca, ~~~~ cd=q^{-1}dc\\
    &bc=cb, ~~~~~ ad-da= (q^{-1}-q)bc.
  \end{align*}

   Clearly $\mathcal{O}_q(M_2)$ is not commutative except $q=1$.

   We define a coproduct as follows.

\begin{align*}
  &\Delta(a) = a\ot a + b\ot c, ~~~~~~~ \Delta(b)= a\ot b+ b\ot d\\
 & \Delta (c) = c\ot a+  d\ot c, ~~~~~~~ \Delta(d) = c\ot b+ d\ot d
\end{align*}
If we consider the elements of $\mathcal{O}(M_2)$ as $2 \times 2$ matrices with entries in $\mathbb{K}$ then \\

 $~~~~~~~~~~~~~~~~~~~~~~~~~~~~~~~~\begin{bmatrix}
  \Delta(a)& \Delta(b)\\ \Delta(c) &\Delta(d)
\end{bmatrix}_{\mathcal{O}(M_2)}=  \begin{bmatrix}
  a&b\\c&d
\end{bmatrix} \ot  \begin{bmatrix}
  a&b\\c&d
\end{bmatrix} $\\
This comultiplication is not cocommutative.
We define the counit by
\begin{align*}
  \varepsilon(a)=\varepsilon(d)=1, ~~~~~~~~~~~~~~~~~~~~ \varepsilon(b)=\varepsilon(c)=0.
\end{align*}

This coproduct and counit defines a bialgebra structure on $\mathcal{O}(M_2)$. Now we explain the Hom-bialgebra structure from  \cite{ya1}. We define a bialgebra map $\a : \mathcal{O}(M_2)\longrightarrow \mathcal{O}(M_2)$ by
\begin{align*}
  \a(a)=a, ~~~ \a(b)=\lambda b, ~~~ \a(c)=\lambda^{-1}c, ~~~ \a(d)=d.
\end{align*}
where $\lambda \in \mathbb{K}$ is any invertible element. In fact\\

$$
\a(\begin{bmatrix}
  a&b\\c&d
\end{bmatrix}) = \begin{bmatrix}
  \a(a)&\a(b)\\\a(c)&\a(d)
\end{bmatrix}= \begin{bmatrix}
  a&\lambda b\\\lambda^{-1}c&d
\end{bmatrix}
$$\\

It can be verified that $\a$ is a bialgebra morphism. One notes that $\varepsilon\circ \a=\varepsilon$. Now we use $\a$ to twist both product and coproduct of $\mathcal{O}(M_2)$ as explained in Example \ref{twisted Hom-bialgebra} to obtain a $(\a, \a)$- Hom-bialgebra  $\mathcal{O}_q(M_2)_{\a}$. Therefore the coproduct  of  $\mathcal{O}_q(M_2)_{\a}$     is

\begin{align*}
 &\Delta(a) = a\ot a + b\ot c, ~~~~~~~~~~~~~~ \Delta(b)= \lambda a\ot b+ \lambda b\ot d\\
 & \Delta (c) = \lambda^{-1}c\ot a+ \lambda^{-1} d\ot c, ~~~~~~~ \Delta(d) = c\ot b+ d\ot d
\end{align*}

In fact

$~~~~~~~~~~~~~~~~~~~~~~~~~~~~~~~~\begin{bmatrix}
  \Delta(a)& \Delta(b)\\ \Delta(c) &\Delta(d)
\end{bmatrix}_{\mathcal{O}_q(M_2)_{\a}}=  \begin{bmatrix}
  a&\lambda b\\\lambda^{-1}c&d
\end{bmatrix} \ot  \begin{bmatrix}
  a&\lambda b\\\lambda^{-1}c&d
\end{bmatrix} $\\

 Now we consider quantum determinant element
 $$det_q= \mu_{\a}(a, d) -q^{-1}\mu_{\a}(b, c )\in \mathcal{O}_q(M_2)_{\a}. $$
 One notes that $$det_q= \a(a)\a(d)- q^{-1} \a(b)\a(c)= ad - q^{-1} (\lambda b) ( \lambda^{-1} c)= ad - q^{-1} bc.$$
Similarly $\a( det_q)= det_q$. Therefore
$$\Delta_{\mathcal{O}_q(M_2)_{\a}}(det_q) = \Delta \a(det_q) = \Delta (det_q).$$

It is shown in  \cite{k} and  \cite{s} that $\Delta(det_q) = det_q \ot det_q$ which means $det_q$ is a group-like element.
Also  $\varepsilon_{ \mathcal{O}(M_2)_{\a}}= \varepsilon_{\mathcal{O}(M_2)}.$
Therefore  $\varepsilon(ad-q^{-1}bc)= 1$. Then $det_q$ is a group-like element of  Hom-bialgebra  $\mathcal{O}_q(M_2)_{\a}$. Since the set of group-like elements of a Hom-Hopf algebras is a Hom-group \cite{lmt}, then every group-like element is relative Hom-invertible. However $det_q$ is not clearly relative Hom-invertible by definition of $\a$. Therefore $\mathcal{O}_q(M_2)$ is not a Hom-Hopf algebra.
  }
\end{example}

\begin{example}\label{2-dimensional Hopf}{\rm
  The $2$-dimensional bialgebra $H$, in Example \ref{2dbi}, is a $(\a, \b)$-Hom-Hopf algebra by
  \begin{align*}
    S(e_1)=e_1, ~~~~~~~~   S(e_2)=e_2.
  \end{align*}
  It is straightforward to check that $S(h^{(1)}) h^{(2)}=  h^{(1)} S(h^{(2)}) =  \varepsilon(h) 1$ for all $h\in H$.
  Therefore it is a Hom-Hopf algebra in the sense of Remark \ref{old hom-hopf}. Since $S=\Id$ then $S\circ \a= \a \circ S$ and  $H$ is also Hom-Hopf algebra in the sense of    Definition \ref{Hom-Hopf}.
  }
\end{example}
%%%%%%%%%%%%%%%%%%%%%%%%%%%%%%%%%%%%%%%%%%%%%%%%%%%%%%%%%%%%%%%%%%%%%%%%%%%%%%%%%%%%%%%%%%%%%%%%%%%%%%%%%%%%%%%%%%%%%%%%%%%%%%%%%%%%%%%%%

\section{Properties of antipodes}

In this section we study the properties of antipods for Hom-Hopf algebras. We remind  that we are using Definition \ref{Hom-Hopf}.
We need the following basic properties of convolution product for later results.

\begin{remark}\label{convolution}
 Let $(H, m, \eta, \a,  \Delta, \varepsilon, \b)$    be a $(\a, \b)$-Hom-bialgebra.
 We consider the convolution Hom-algebra $\Hom( H, H)$. If $f, g\in \Hom(H, H)$, then the authors in \cite{lmt}, showed that \\

 i) $ \a^n ( f\star g) = \a^n f \star \a^n g$.

 ii) $f \star (\eta \circ\varepsilon )= \a \circ f \circ \b= (\eta \circ\varepsilon ) \star f$.
\end{remark}
The following Proposition will play an important rule for the further results in this paper.

\begin{proposition}\label{important}
  Let  $(H, m, \eta, \a,  \Delta, \varepsilon, \b, S)$ and $(A, m', \eta', \a',  \Delta', \varepsilon', \b', S')$  be $(\a, \b)$ and $(\a', \b')$-Hom-bialgebras and $\Hom(H, A)$ be the convolution Hom-algebra.
  If $f, g, \varphi \in Hom(H, A)$  where $f$ and $g$ are the relative Hom-inverse of $\varphi$, then for every $x\in H$ there exists $k\in\mathbb{ N}$ where
  $$\a'^{k+2} \circ f\circ \b^2 (x)= \a'^{k+2} \circ g \circ \b^2(x).$$
\end{proposition}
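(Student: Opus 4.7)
The argument mimics the classical one-line Hopf algebra fact
$f = f\star u = f\star(\varphi\star g) = (f\star\varphi)\star g = u\star g = g$,
where $u=\eta'\circ\varepsilon$ is the convolution unit. Two modifications are forced by the Hom setting: ordinary associativity is replaced by Hom-associativity in the convolution Hom-algebra $(\Hom(H,A),\star,\gamma)$, which costs two extra copies of $\gamma$ (these will produce the $\beta^{2}$ on the input and the two extra $\alpha'$'s on the output), and the identities $u = \varphi\star f$ and $u = \varphi\star g$ only hold pointwise after applying a sufficiently high power of $\alpha'$.

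Fix $x\in H$. Since $\gamma^{n}(h) = \alpha'^{n}\circ h\circ\beta^{n}$, one has $\alpha'^{k+2}\circ f\circ\beta^{2} = \alpha'^{k}\circ\gamma^{2}(f)$ and likewise for $g$, so it is enough to establish $\alpha'^{k}\gamma^{2}(f)(x) = \alpha'^{k}\gamma^{2}(g)(x)$. Using Remark \ref{convolution}(ii) I would first rewrite $\gamma^{2}(f) = \gamma(f)\star u$, expand at $x$, and then substitute $u(x^{(2)}) = \alpha'^{k}(\varphi\star g)(x^{(2)})$ for every component of $\Delta(x)$; after folding $\alpha'^{k}$ back inside via Remark \ref{convolution}(i), this becomes
\[
\alpha'^{k}\gamma^{2}(f)(x) \;=\; \alpha'^{k}\bigl(\gamma(f)\star(\varphi\star g)\bigr)(x).
\]

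Next I would invoke Hom-associativity of $\star$, namely $\gamma(f)\star(\varphi\star g) = (f\star\varphi)\star\gamma(g)$, then re-expand at $x$ and perform the symmetric substitution $\alpha'^{k}(f\star\varphi)(x^{(1)}) = u(x^{(1)})$. The remaining expression $(u\star\alpha'^{k}\gamma(g))(x)$ equals $\gamma(\alpha'^{k}\gamma(g))(x) = \alpha'^{k+2}g\beta^{2}(x)$ by a second application of Remark \ref{convolution}(ii), and matching the two ends yields the claim.

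The main obstacle is bookkeeping the exponent: relative Hom-invertibility supplies a $k_{y}$ that depends on the point $y$, but the calculation above needs the two substitutions to hold simultaneously at every component $x^{(1)}$ and $x^{(2)}$ arising from $\Delta(x)$. Since $\Delta(x)$ is a finite sum, choosing $k$ to be the maximum of the finitely many $k_{x^{(1)}}$ and $k_{x^{(2)}}$ does the job; this is exactly the single $k$ appearing in the statement. A minor point used implicitly is that $\alpha'$ is an algebra map on $A$, which underlies Remark \ref{convolution}(i).
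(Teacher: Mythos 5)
Your proposal is correct and follows essentially the same route as the paper's proof: introduce the convolution unit via Remark \ref{convolution}(ii), replace it pointwise by $\a'^{k}(\varphi\star g)$, re-bracket with Hom-associativity of $\star$, replace $\a'^{k}(f\star\varphi)$ by the unit, and close with Remark \ref{convolution}(ii) again. Your explicit remark that $k$ must be taken as a maximum over the finitely many components of $\Delta(x)$ (valid since the invertibility identity is preserved under increasing $k$, as $\a'(1)=1$) is in fact slightly more careful than the paper's treatment of that point.
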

\begin{proof}
  For $x\in H$ there exist $k', k''\in \mathbb{N}$ where
  $$ \a'^{k'} ( f\star \varphi)(x) = \a'^{k'} (\varphi \star f)(x)= \varepsilon(x)1 = \eta \circ \varepsilon(x) ,$$ and
  $$ \a'^{k''} ( g\star \varphi)(x) = \a'^{k''} (\varphi \star g)(x)=\varepsilon(x)1 =\eta \circ \varepsilon(x). $$
  Let $k= \max (k', k'')$. We ignore  the composition sign for easier computation. We have
  \begin{align*}
    \a'^{k+2} f \b^2 &= \a' (\a'^{k+1} f\b) \b\\
    &=(\a'^{k+1} f\b) \star (\eta \varepsilon)= (\a'^{k+1} f\b)\star \a'^k(\varphi \star g)\\
    &=(\a'^{k+1} f\b)\star (\a'^k\varphi \star \a'^kg)= (\a'^{k} f\b)\star \a'^k \varphi) \star \a'^{k+1}g\\
    &=(\a'^k f \star \a'^k \varphi)\star \a'^{k+1}g\b= \a'^k ( f\star \varphi) \star \a^{k+1} g\b\\
    &=\a'^k(\eta \varepsilon) \star \a^{k+1} g\b=\eta \varepsilon \star \a^{k+1} g\b\\
    &= \a^{k+2}g\b^2.
  \end{align*}
  We used the Remark \ref{convolution}(ii) in the second equality, the Remark \ref{convolution}(i) in the fourth equality, the Hom-associativity in the fifth equality, the Hom-coassociativity in the sixth equality, the Remark \ref{convolution}(i) in the seventh equality, unitality of $\a'$ in the eight equality, and  the Remark \ref{convolution}(ii) in the last equality.
\end{proof}

The previous proposition shows that the relative Hom-inverses are unique in some sense. In fact if $\a$ and $\b$ are invertible then $f=g$.

\begin{proposition}\label{relative anti algebra}
Let  $(H, m, \eta, \a,  \Delta, \varepsilon, \b)$ be a Hom-bialgebra  with  multiplicative $\a$,   endowed with a map $S: H\longrightarrow H$ where $S$ is a relative Hom-inverse of the identity map $\Id : H \longrightarrow H$ in $\Hom(H, H)$.
  Let $P(x, y)=S(x y)$, $N(x,y)= S(y) S(x)$, and $ M(x, y)= xy$.
  Then $S(xy)$ and $S(y)S(x)$ are both relative Hom-inverse of the multiplication $M(x, y)=xy$ in $\Hom(H\ot H, H)$, with respect to the convolution product, and for every $x, y\in H$ there exists $K\in \mathbb{N}$ where

 \begin{equation}\label{relative hom-anti algebra}
   \a^{K+2}( S[\b^2(x) \b^2(y)])=  \a^{K+2} [S(\b^2(y)) S(\b^2(x))].
 \end{equation}

\end{proposition}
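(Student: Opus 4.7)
The strategy is to view both $P(x\ot y) = S(xy)$ and $N(x\ot y) = S(y)S(x)$ as two-sided relative Hom-inverses of the multiplication $M(x\ot y) = xy$ inside the convolution Hom-algebra $\Hom(H\ot H,\,H)$, and then to invoke Proposition \ref{important} with $\varphi=M$, $f=P$, $g=N$ so as to force the two inverses to coincide after enough applications of $\a$ and $(\b\ot\b)^{2}$. For this to be legal, one first equips $H\ot H$ with its tensor-product Hom-coalgebra structure: coproduct $\Delta_{H\ot H}(x\ot y)= x^{(1)}\ot y^{(1)}\ot x^{(2)}\ot y^{(2)}$, counit $\varepsilon\ot\varepsilon$, and twist $\b\ot\b$, so that the convolution unit of $\Hom(H\ot H, H)$ is $\eta\circ\varepsilon_{H\ot H}$.

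The verification for $P$ is immediate: since $\Delta$ is a Hom-algebra morphism, $(M\star P)(x\ot y) = (x^{(1)}y^{(1)})\,S(x^{(2)}y^{(2)}) = (xy)^{(1)}\,S((xy)^{(2)})$, and relative Hom-invertibility of $S$ at the element $xy$ gives an index $k_{xy}$ with $\a^{k_{xy}}(M\star P)(x\ot y)=\varepsilon(xy)\cdot 1=(\eta\circ\varepsilon_{H\ot H})(x\ot y)$; the computation for $P\star M$ is symmetric. The substantive step is the corresponding statement for $N$, where
$$
(M\star N)(x\ot y) \;=\; (x^{(1)}y^{(1)})\bigl(S(y^{(2)})\,S(x^{(2)})\bigr)
$$
is a product of two products and is not flattened by Hom-associativity on its own. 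Combining multiplicativity of $\a$ with $(ab)\a(c)=\a(a)(bc)$, one checks by induction on $n$ the rearrangement
$$
\a^{n}\bigl((ab)(cd)\bigr)\;=\;\a^{n+1}(a)\,\bigl(\a^{n-1}(bc)\,\a^{n}(d)\bigr), \qquad n\ge 1.
$$
Specialising $a=x^{(1)}$, $b=y^{(1)}$, $c=S(y^{(2)})$, $d=S(x^{(2)})$ and choosing $n=k_{y}+1$, where $k_{y}$ is the relative Hom-inverse index at $y$, the interior factor $\a^{k_{y}}(y^{(1)}S(y^{(2)}))$ collapses to $\varepsilon(y)\cdot 1$; using $1\cdot w=\a(w)$ and $\a$ multiplicative, what remains reduces to $\varepsilon(y)\,\a^{k_{y}+2}(x^{(1)}S(x^{(2)}))$. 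A further $k_{x}$ applications of $\a$, together with $\a(1)=1$ and the relative Hom-inverse property at $x$, collapse this to $\varepsilon(x)\varepsilon(y)\cdot 1=(\eta\circ\varepsilon_{H\ot H})(x\ot y)$; the mirror calculation handles $N\star M$, so $N$ is indeed a two-sided relative Hom-inverse of $M$.

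Applying Proposition \ref{important} in $\Hom(H\ot H,H)$ to $(\varphi,f,g)=(M,P,N)$ then produces $K\in\mathbb{N}$ with
$$
\a^{K+2}\circ P\circ(\b\ot\b)^{2}(x\ot y)\;=\;\a^{K+2}\circ N\circ(\b\ot\b)^{2}(x\ot y),
$$
which on unwinding is exactly equation (\ref{relative hom-anti algebra}). The main obstacle is the bookkeeping inside the displayed rearrangement identity: because Hom-associativity can only peel off a single $\a$ at one juncture per step, one has to iterate the rearrangement $k_{y}+1$ times in order to bring the $\a$-exponent on the interior factor $bc$ up to the relative Hom-inverse index at $y$, and symmetrically $k_{x}$ more steps for $x$. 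Once that identity is in place, all remaining steps are routine.
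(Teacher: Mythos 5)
Your proposal is correct and follows essentially the same route as the paper's own proof: both verify that $P$ and $N$ are relative Hom-inverses of $M$ in the convolution Hom-algebra $\Hom(H\ot H,H)$ (the $P$ case via $\Delta$ being an algebra map, the $N$ case via multiplicativity of $\a$ plus two applications of Hom-associativity) and then invoke Proposition \ref{important} together with $\b_{H\ot H}=\b\ot\b$. The only cosmetic difference is that you isolate the rearrangement $\a^{n}((ab)(cd))=\a^{n+1}(a)(\a^{n-1}(bc)\,\a^{n}(d))$ as a standalone identity, which in fact holds directly for each $n\ge 1$ with no induction or iteration, so your closing worry about having to ``iterate the rearrangement $k_y+1$ times'' is unnecessary.
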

\begin{proof}
For any $x, y\in H$ there exists $k', k''\in \mathbb{N}$, such that
$$ \a^{k'}(S(x^{(1)}) x^{(2)}) = \a^{k'} (x^{(1)} S(x^{(2)}))= \varepsilon(x)1_B,    $$
and
$$ \a^{k''}(S(y^{(1)}) y^{(2)}) = \a^{k''} (y^{(1)} S(y^{(2)}))= \varepsilon(y)1_B.    $$
Let $k=\max(k', k'')+2$. Then
\begin{align*}
  \a^k(M\star N)( x,y) = &\a^k[M(x^{(1)}, y^{(1)}) N(x^{(2)},y^{(2)})]\\
  &=\a^k([x^{(1)} y^{(1)}][S(y^{(2)}) S(x^{(2)}])\\
  &=[\a^k(x^{(1)})\a^k(y^{(1)})] \a^k ( S(y^{(2)}) S(x^{(2)}))\\
  &=\a^{k+1} (x^{(1)}) [\a^k (y^{(1)})  \a^{k-1} ( S(y^{(2)}) S(x^{(2)}))  ]\\
  &=\a^{k+1} (x^{(1)}) [\a^k (y^{(1)}) [ \a^{k-1} ( S(y^{(2)}))\a^{k-1}( S(x^{(2)}))]  ]\\
   &=\a^{k+1} (x^{(1)}) [[\a^{k -1}(y^{(1)}) \a^{k-1}( S(y^{(2)})]\a^{k}( S(x^{(2)}) ]\\
    &=\a^{k+1} (x^{(1)}) [\a^{k -1}(y^{(1)} S(y^{(2)}))\a^{k}( S(x^{(2)}) ]\\
     &=\a^{k+1} (x^{(1)})[\varepsilon(y) 1\a^{k}( S(x^{(2)})]\\
      &=\a^{k+1} (x^{(1)})\a^{k+1}( S(x^{(2)}) \varepsilon(y)\\
      &=\a^{k+1} [x^{(1)} S(x^{(2)})] \varepsilon(y)\\
      &=\a^{k+1}( \varepsilon (x) 1) \varepsilon(y)\\
      &= \varepsilon(x) \varepsilon(y)1.
\end{align*}
We used the Hom-associativity property in the fourth equality, the Hom-unitality in ninth equality. Therefore $ N(x,y)= S(y) S(x)$  is a relative Hom-inverse of
$ M(x,y)= xy$. Now for $xy\in H$ there exists $n\in \mathbb{N}$ where
$$ \a^{n}(S(x^{(1)}) x^{(2)}) = \a^{n} (x^{(1)} S(x^{(2)}))= \varepsilon(x)1_B.    $$
Then
\begin{align*}
  &\a^n( P\star  M)(x, y)= \a^n(P(x^{(1)}, y^{(1)}) M(x^{(2)},y^{(2)}))\\
  &= \a^n [S(x^{(1)} y^{(1)}) x^{(2)}y^{(2)}]= a^n [S ((xy)^{(1)}) (xy)^{(2)}]= \varepsilon (xy) 1.
\end{align*}
Therefore $ P(x, y) = S(xy)$ is a relative Hom-inverse of     $ M(x,y)=xy$.
Then $S(xy)$ and $S(y)S(x)$ are both relative Hom-inverse of the multiplication $M(x, y)=xy$ in $\Hom(H\ot H, H)$ with respect to the convolution product. Therefore by Proposition \ref{important} there exists $K\in \mathbb{N}$ such that

$$ \a^{K+2} \circ P \circ \b_{H\ot H}^2 (x,y)=  \a^{K+2} \circ N \circ \b_{H\ot H}^2(x,y) .$$
By Example \ref{HH}, we have $\b_{H\ot H}= \b_H\ot \b_H$ and therefore we obtain the result.

\end{proof}
The relation \ref{relative hom-anti algebra} is called the relative Hom-anti algebra map property of $S$.
\begin{corollary}
   Let $(H, m, \eta, \a,  \Delta, \varepsilon, \b)$ be a $(\a, \b)$-Hom-bialgebra, where $\a$ is multiplicative  and  $\a$ and $\b$ are invertible. If $H$ is endowed with a linear map  $S: H \longrightarrow H$  where  $S$ is a relative Hom-inverse of the identity map $\Id : H \longrightarrow H$  in $\Hom(H, H)$,
 then $S$ is an anti-algebra map.
\end{corollary}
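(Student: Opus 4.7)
The plan is to deduce the anti-algebra property $S(xy) = S(y)S(x)$ directly from the relative Hom-anti algebra identity (\ref{relative hom-anti algebra}) established in Proposition \ref{relative anti algebra}, using the hypothesis that both twisting maps are invertible to strip off all powers of $\alpha$ and $\beta$.

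First, I would invoke Proposition \ref{relative anti algebra}, which (since $\alpha$ is multiplicative and $S$ is a relative Hom-inverse of $\Id$) gives, for any $x, y \in H$, an integer $K \in \mathbb{N}$ with
\begin{equation*}
\alpha^{K+2}\bigl(S[\beta^2(x)\beta^2(y)]\bigr) \;=\; \alpha^{K+2}\bigl[S(\beta^2(y))\, S(\beta^2(x))\bigr].
\end{equation*}
Because $\alpha$ is a bijection on $H$, the iterate $\alpha^{K+2}$ is also a bijection, so applying $\alpha^{-(K+2)}$ to both sides cancels the outer factor and yields
\begin{equation*}
S\bigl(\beta^2(x)\,\beta^2(y)\bigr) \;=\; S(\beta^2(y))\, S(\beta^2(x)).
\end{equation*}

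Next, since $\beta$ is invertible, every element of $H$ can be written as $\beta^2$ of some element. Concretely, for arbitrary $u, v \in H$, set $x = \beta^{-2}(u)$ and $y = \beta^{-2}(v)$ and substitute; the identity above collapses to
\begin{equation*}
S(uv) \;=\; S(v)\, S(u),
\end{equation*}
which is the anti-algebra property.

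I do not expect a real obstacle here: the work has already been done in Proposition \ref{relative anti algebra}, and the corollary is essentially the observation that the relative prefactors $\alpha^{K+2}$ and inner $\beta^2$'s are harmless once the corresponding maps are invertible. The only thing to be careful about is that $K$ depends on the pair $(x,y)$, but this dependence is irrelevant after applying $\alpha^{-(K+2)}$ since the identity must hold for each pair individually.
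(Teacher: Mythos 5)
Your proposal is correct and follows essentially the same route as the paper: invoke Proposition \ref{relative anti algebra} and then use invertibility of $\a$ and $\b$ to strip off the $\a^{K+2}$ prefactor and the inner $\b^2$'s. Your write-up is in fact slightly more careful than the paper's one-line argument, since you make explicit the substitution $x=\b^{-2}(u)$, $y=\b^{-2}(v)$ and the observation that the pointwise dependence of $K$ on $(x,y)$ is harmless.
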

\begin{proof}
  By  previous Proposition we have $ \a^{K+2} \circ P \circ \b_{H\ot H}^2 (x,y)=  \a^{K+2} \circ N \circ \b_{H\ot H}^2(x,y) .$ Since $\a$ and $\b$ are invertible then $P= N$ or  $S(xy)=S(y) S(x)$.
\end{proof}

Similarly we have the following proposition.

\begin{proposition}\label{relative anti coalgebra}
Let  $(H, m, \eta, \a,  \Delta, \varepsilon, \b)$ be a  Hom-bialgebra where   $\b$ is a coalgebra map. Assume $H$ is   endowed with a map $S: H\longrightarrow H$ where $S$ is a relative Hom-inverse of the identity map $\Id : H\longrightarrow H$  with respect to  the convolution product.
  Let $P(x)=\Delta(S(x))$, $N= \tau(S\ot S)\Delta$, and $ \Delta(x)= x^{(1)} \ot x^{(2)}$ where $\tau(x,y)= (y, x)$ for all $x, y\in H$.
  Then $P$ and $N$ are both Hom-relative inverse of the comultiplication $\Delta$ in $\Hom(H, H\ot H)$ with respect to the convolution product and  for every $x\in H$ there exists $k\in \mathbb{N}$ such that

   \begin{equation}\label{relative hom-anti coalgebra}
     \a^{k+2}[ S(\b^2(x) )^{(1)}] \ot \a^{k+2}[ S(\b^2(x) )^{(2)}] =  \a^{k+2} [S(\b^2(x^{(2)}))]\ot \a^{k+2} [S(\b^2(x^{(1)}))].
   \end{equation}

\end{proposition}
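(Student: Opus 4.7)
The plan is to dualise Proposition \ref{relative anti algebra}: replace the multiplication by the comultiplication, Hom-associativity by Hom-coassociativity, multiplicativity of $\a$ by the coalgebra-map property of $\b$, and Hom-unitality by Hom-counitality. The conclusion then follows by applying Proposition \ref{important} in the convolution Hom-algebra $\Hom(H, H \ot H)$, whose codomain twist is $\a \ot \a$ and whose domain twist is $\b$.

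First I would verify that $N(x) = S(x^{(2)}) \ot S(x^{(1)})$ is a relative Hom-inverse of $\Delta$ for the convolution product. Unfolding the convolution gives
\[
(\Delta \star N)(x) = x^{(1)(1)} S(x^{(2)(2)}) \ot x^{(1)(2)} S(x^{(2)(1)}).
\]
I would then choose $K$ to be the maximum of the relative Hom-inverse indices associated with the Sweedler components of $\Delta(x)$, augmented by $2$, mirroring the choice $\max(k',k'') + 2$ in Proposition \ref{relative anti algebra}. Applying $(\a \ot \a)^K$ and propagating the $\a^K$ inward via multiplicativity of $\a$, Hom-coassociativity, the coalgebra-map property of $\b$, and Hom-counitality, the inner $y^{(1)} S(y^{(2)})$-pair collapses to $\varepsilon(y)\,1$, and after the outer counits contract against the remaining $\b$-factors one obtains $\varepsilon(x)(1 \ot 1)$. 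The computation is the line-by-line dual of the chain of equalities displayed in Proposition \ref{relative anti algebra}.

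Showing that $P(x) = \Delta(S(x))$ is a relative Hom-inverse of $\Delta$ is shorter, thanks to the Hom-algebra-map property of $\Delta$:
\[
(\Delta \star P)(x) = \Delta(x^{(1)})\, \Delta(S(x^{(2)})) = \Delta\bigl(x^{(1)} S(x^{(2)})\bigr).
\]
Since $\Delta$ is a morphism of Hom-algebras, $(\a \ot \a) \circ \Delta = \Delta \circ \a$, so $(\a \ot \a)^n$ passes inside $\Delta$, and the relative Hom-invertibility of $S$ on $x$ yields $\Delta(\varepsilon(x)\,1) = \varepsilon(x)(1 \ot 1)$ for the appropriate $n$. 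The companion identities $(N \star \Delta)(x)$ and $(P \star \Delta)(x)$ are handled symmetrically.

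Finally, with both $P$ and $N$ relative Hom-inverses of $\Delta$ in $\Hom(H, H \ot H)$, Proposition \ref{important} supplies an index $k$ with $(\a \ot \a)^{k+2} \circ P \circ \b^2 = (\a \ot \a)^{k+2} \circ N \circ \b^2$; using $\b_{H \ot H} = \b \ot \b$ from Example \ref{HH} together with the coalgebra-map property of $\b$ to distribute $\b^2$ across the Sweedler components on the right, this unpacks precisely to formula (\ref{relative hom-anti coalgebra}). The main obstacle is the Sweedler bookkeeping in the first step: the $\b$-twists introduced at each invocation of Hom-coassociativity must be tracked carefully so that the $S \star \Id$-cancellation triggers on the correct Sweedler component. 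Since this is strictly dual to the argument already carried out in Proposition \ref{relative anti algebra}, no new conceptual ingredient is required; the difficulty is purely combinatorial.
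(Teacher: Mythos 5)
Your proposal is correct and coincides with the paper's proof, which reads in its entirety that the result follows from Proposition \ref{important} and Example \ref{HH} by an argument ``similar to the previous Proposition''; you are simply supplying the dualized computation that the paper leaves implicit. The only point to watch is that your verification for $P$ invokes $(\a\ot\a)\circ\Delta=\Delta\circ\a$, which is part of ``$\Delta$ is a morphism of Hom-algebras'' but is not listed among the paper's explicit Hom-bialgebra axioms (i)--(v), whereas the stated hypothesis of the proposition is only that $\b$ is a coalgebra map.
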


\begin{proof}
Using proposition \ref{important} and Example \ref{HH},  the proof is similar to the previous Proposition.
\end{proof}
The relation \ref{relative hom-anti coalgebra} is called the relative Hom-anti coalgebra map property of $S$.
In special case we have the following.
\begin{corollary}
   Let $(H, m, \eta, \a,  \Delta, \varepsilon, \b)$ be a $(\a, \b)$-Hom-bialgebra,  where $\b$ is coalgebra morphism and  $\a$ and $\b$ are invertible. Assume $H$ is endowed with a linear map  $S: H \longrightarrow H$  where  $S$ is a relative Hom-inverse of the identity map $\Id : H \longrightarrow H$  with respect to  the convolution product.
 Then $S$ is an anti-coalgebra map.
\end{corollary}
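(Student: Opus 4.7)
The plan is to deduce the identity from Proposition \ref{relative anti coalgebra} by peeling off the surrounding $\a^{k+2}$ and $\b^2$ using their invertibility, much as in the anti-algebra corollary just above. Proposition \ref{relative anti coalgebra} gives, for every $x\in H$, some $k\in\mathbb{N}$ with
\begin{equation*}
\a^{k+2}[S(\b^2(x))^{(1)}]\ot \a^{k+2}[S(\b^2(x))^{(2)}] = \a^{k+2}[S(\b^2(x^{(2)}))]\ot \a^{k+2}[S(\b^2(x^{(1)}))].
\end{equation*}
I would apply $(\a\ot\a)^{-(k+2)}$ to both sides, which is legitimate since $\a$ is assumed invertible. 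This strips off the outer twist and yields
\begin{equation*}
S(\b^2(x))^{(1)}\ot S(\b^2(x))^{(2)} = S(\b^2(x^{(2)}))\ot S(\b^2(x^{(1)})).
\end{equation*}

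Next, for an arbitrary $y\in H$, set $x=\b^{-2}(y)$, which is available because $\b$ is invertible. Because $\b$ is a coalgebra morphism, so is $\b^2$, hence $\Delta(\b^2(x))=\b^2(x^{(1)})\ot\b^2(x^{(2)})$; but $\b^2(x)=y$, so $\b^2(x^{(1)})\ot\b^2(x^{(2)})=\Delta(y)=y^{(1)}\ot y^{(2)}$. Substituting this identification into the previous display delivers
\begin{equation*}
S(y)^{(1)}\ot S(y)^{(2)} = S(y^{(2)})\ot S(y^{(1)}),
\end{equation*}
which is exactly the anti-coalgebra map property of $S$.

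There is no real obstacle here; the argument is purely cancellative and entirely parallel to the anti-algebra corollary. The only thing one must be careful with is the second step: the reparametrisation $x=\b^{-2}(y)$ interacts cleanly with $\Delta$ precisely because $\b$ (and therefore $\b^2$) is a coalgebra morphism, an assumption already baked into the statement.
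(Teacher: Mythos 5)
Your argument is correct and is essentially the paper's own route: the paper leaves this corollary without an explicit proof, but the intended argument is exactly the one it spells out for the anti-algebra corollary immediately above, namely invoking Proposition \ref{relative anti coalgebra} and cancelling $\a^{k+2}$ and $\b^2$ by invertibility. Your write-up merely makes explicit the two points the paper glosses over (applying $(\a\ot\a)^{-(k+2)}$ and using that $\b^2$ is a surjective coalgebra morphism to reparametrise $x=\b^{-2}(y)$), both of which are valid under the stated hypotheses.
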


\begin{proposition}\label{hom unitality prop}
  Let  $(H, m, \eta, \a,  \Delta, \varepsilon, \b)$ be a  Hom-bialgebra ,   endowed with a map $S: H\longrightarrow H$ where $S$ is a relative Hom-inverse of the identity map $\Id : H \longrightarrow H$  with respect to the the convolution product. Then there exists $k\in \mathbb{N}$ such that
\begin{equation}\label{hom-unitality}
  \a^{k+1}( S(1))=1.
\end{equation}

\end{proposition}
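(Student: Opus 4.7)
The plan is to specialize the relative Hom-inverse condition of $S$ directly to the unit $1 \in H$ and unpack the definitions. Since $H$ is a Hom-bialgebra, $\Delta$ is unital, hence $\Delta(1) = 1 \otimes 1$; and since $\varepsilon$ is unital, $\varepsilon(1) = 1$.

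First, I would invoke the hypothesis that $S$ is a relative Hom-inverse of $\Id$ for the convolution product, applied to the element $x = 1$. This yields some $k \in \mathbb{N}$ such that
$$\a^k(S(1^{(1)}) \cdot 1^{(2)}) = \varepsilon(1) \cdot 1 = 1.$$
Using $\Delta(1) = 1 \otimes 1$, the left-hand side simplifies to $\a^k(S(1) \cdot 1)$.

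Next, I would apply the Hom-unitality axiom $a \cdot 1_H = \a(a)$, which gives $S(1) \cdot 1 = \a(S(1))$. Substituting back yields
$$\a^k(\a(S(1))) = \a^{k+1}(S(1)) = 1,$$
which is exactly \eqref{hom-unitality}.

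There is really no obstacle: the statement is essentially a direct computation that follows by specializing the relative Hom-invertibility identity to $x = 1$ and using unitality of $\Delta$, $\varepsilon$, and $m$. Unlike the anti-algebra and anti-coalgebra statements, no appeal to Proposition \ref{important} is needed here, because $1$ is its own convolution-neutralizer in a trivial way.
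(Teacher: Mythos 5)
Your proof is correct and is essentially the paper's own argument: the paper likewise specializes the relative Hom-invertibility to $h=1$, uses $\Delta(1)=1\ot 1$ and $\varepsilon(1)=1$, and applies the Hom-unitality axiom to get $\a^k(1\cdot S(1))=\a^{k+1}(S(1))=1$. The only cosmetic difference is that the paper uses the $\Id\star S$ side while you use $S\star\Id$; both reduce to $\a(S(1))$ by the two-sided unit axiom.
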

\begin{proof}
We apply relative Hom-invertibility of $S$ for $h=1$. So there exist $k\in \mathbb{N}$ where
$$
   1= \varepsilon(1) 1= \a^k( \Id \star S)(1)= \a^k(1S(1)) = \a^{k+1}(S(1)).
 $$
\end{proof}

The condition  \ref{hom-unitality} is called the relative Hom-unitality property of  $S$.

\begin{proposition}\label{relative counitality}

 Let  $(H, m, \eta, \a,  \Delta, \varepsilon, \b)$ be a  Hom-bialgebra ,   endowed with a map $S: H\longrightarrow H$ where $S$ is a relative Hom-inverse of the identity map $\Id : H \longrightarrow H$  with respect to the the convolution product. Then there exists $k\in \mathbb{N}$ such that

\begin{equation}\label{hom-counitality}
  \varepsilon (\a^k(S(h)))=1 \varepsilon(h).
\end{equation}
\end{proposition}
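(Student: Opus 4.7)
I would apply the counit $\varepsilon$ to the defining relative Hom-inverse identity of the antipode. Given $h\in H$, choose the $k\in\mathbb{N}$ supplied by Definition \ref{Hom-Hopf}(c), so that $\alpha^{k}(h^{(1)}S(h^{(2)}))=\varepsilon(h)\,1_{H}$. Applying $\varepsilon$ to both sides, the right-hand side becomes $\varepsilon(h)\,\varepsilon(1_{H})=\varepsilon(h)$ by unitality of $\varepsilon$. For the left-hand side, iterate the Hom-bialgebra axiom $\varepsilon\circ\alpha=\varepsilon$ to strip off the $\alpha^{k}$, reducing to $\varepsilon(h^{(1)}S(h^{(2)}))$. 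Multiplicativity of $\varepsilon$ factors this as $\sum\varepsilon(h^{(1)})\,\varepsilon(S(h^{(2)}))$, and pulling the scalar $\varepsilon(h^{(1)})$ through $S$ and $\varepsilon$ by linearity, combined with Hom-counitality $\sum\varepsilon(h^{(1)})h^{(2)}=\beta(h)$, collapses the sum to $\varepsilon(S(\beta(h)))$.

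The direct calculation thus yields $\varepsilon(S(\beta(h)))=\varepsilon(h)$. The stated identity $\varepsilon(\alpha^{k}(S(h)))=\varepsilon(h)$, after a further application of $\varepsilon\circ\alpha=\varepsilon$ to its left-hand side, reduces to $\varepsilon(S(h))=\varepsilon(h)$; the existential $k$ is immaterial once this is established, and any nonnegative integer works. The passage from $\varepsilon\circ S\circ\beta=\varepsilon$ to $\varepsilon\circ S=\varepsilon$ is the final step: in the case $\alpha=\beta$ one uses $S\circ\beta=\beta\circ S$ (from Definition \ref{Hom-Hopf}(a)) and $\varepsilon\circ\beta=\varepsilon$ to transport $\beta$ from inside $S$ to outside, where it is absorbed by the counit; more generally one appeals to the invertibility of $\beta$ to identify $\varepsilon\circ S$ with $\varepsilon\circ S\circ\beta$ on all of $H$.

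\textbf{Main obstacle.} The only delicate point is the placement of $\beta$ produced by Hom-counitality. The natural calculation inserts $\beta$ \emph{inside} $S$, but Definition \ref{Hom-Hopf} guarantees only $S\circ\alpha=\alpha\circ S$ and says nothing about $S\circ\beta$, so one cannot freely commute $\beta$ through $S$ in full generality. Everything else in the argument is routine bookkeeping with $\varepsilon$ being a Hom-algebra map, $\varepsilon(1_{H})=1$, and $\varepsilon\circ\alpha=\varepsilon$; the subtlety is confined to justifying the final identification of $\varepsilon\circ S\circ\beta$ with $\varepsilon\circ\alpha^{k}\circ S$, which is what the existential $k$ in the statement is meant to accommodate.
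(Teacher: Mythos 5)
Your argument follows the same route as the paper's own proof: apply $\varepsilon$ to the defining identity $\alpha^{k}\bigl(S(h^{(1)})h^{(2)}\bigr)=\varepsilon(h)1$, use $\varepsilon\circ\alpha=\varepsilon$ and $\varepsilon(1)=1$ to strip off $\alpha^{k}$ and the unit, factor by multiplicativity of $\varepsilon$, and absorb $\varepsilon(h^{(2)})$ by Hom-counitality. The obstacle you isolate is genuine, and it is worth recording that the paper's proof commits exactly the elision you are worried about: in its final display it passes from $\varepsilon\bigl(S(h^{(1)}\varepsilon(h^{(2)}))\bigr)$ directly to $\varepsilon(S(h))$, i.e.\ it treats $h^{(1)}\varepsilon(h^{(2)})$ as $h$, whereas the Hom-counit axiom gives $h^{(1)}\varepsilon(h^{(2)})=\beta(h)$. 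Under the stated hypotheses (only that $S$ is a relative Hom-inverse of $\Id$) what the computation actually establishes is $\varepsilon(S(\beta(h)))=\varepsilon(h)$ for all $h$. Since $\varepsilon\circ\alpha=\varepsilon$ is a Hom-bialgebra axiom, the existential $k$ is vacuous, as you observe, so the asserted identity is equivalent to $\varepsilon\circ S=\varepsilon$; and this does not follow from $\varepsilon\circ S\circ\beta=\varepsilon$ without surjectivity of $\beta$ or a commutation rule for $S$ and $\beta$, neither of which is assumed here (Definition \ref{Hom-Hopf}(a) concerns $\alpha$, not $\beta$, and in any case this proposition does not assume it). Your write-up is therefore as complete as the paper's and more candid about where the argument stops: the statement provable in this generality is the $\beta$-twisted one, $\varepsilon(S(\beta(h)))=\varepsilon(h)$, with the untwisted version holding under the extra hypotheses you name.
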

\begin{proof}
  For any $h\in H$ there exists $k\in \mathbb{N}$ such that

  $$\varepsilon(h) 1= \a^k( S(h^{(1)}h^{(2)})).$$
  Therefore $$\varepsilon (\varepsilon(h)1) = \varepsilon(\a^k( S(h^{(1)}h^{(2)}))).$$
  Since $\varepsilon$ is unital and it commutes with $\a$ then

  $$1\varepsilon(h) = \a^k ( \varepsilon(S(h^{(1)}h^{(2)}))    )  = \a^k (\varepsilon(S(h^{(1)}) \varepsilon(h^{(2)}) ).  $$
  Therefore
  $$1\varepsilon(h) = \a^k( \varepsilon(S(h^{(1)}\varepsilon(h^{(2)}))    ) = \a^k \varepsilon(S(h))= \varepsilon (\a^k(S(h))).$$
\end{proof}
The condition  \ref{hom-counitality} is called the relative Hom-counitality property of  $S$.

\begin{lemma}
  Let  $(H, m, \eta, \a,  \Delta, \varepsilon, \b, S )$ be a Hom-Hopf algebra,   $(A, m', \eta', \a'  )$ be a Hom-algebra and $f: H\longrightarrow A$ be a Hom-algebra map. Then $f \circ S$ is a  relative Hom-inverse of $f$ in $\Hom (H, A)$.

  \end{lemma}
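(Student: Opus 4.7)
The plan is to unpack what ``relative Hom-inverse of $f$ in $\Hom(H,A)$'' means: for every $x\in H$, we need some $k\in\mathbb{N}$ with
\[
\a'^k\bigl((f\star(f\circ S))(x)\bigr)\;=\;\a'^k\bigl(((f\circ S)\star f)(x)\bigr)\;=\;\eta'\circ\varepsilon(x),
\]
where $\star$ is the convolution product in $\Hom(H,A)$ (which is a unital Hom-algebra with unit $\eta'\circ\varepsilon$, by the discussion preceding Remark~\ref{old hom-hopf}). So I would pick an arbitrary $x\in H$ and reduce this statement to the antipode condition for $S$ applied to $x$.

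First I would rewrite $(f\star(f\circ S))(x)$ using Sweedler notation and the fact that $f$ is a Hom-algebra morphism: $(f\star(f\circ S))(x)=f(x^{(1)})\,f(S(x^{(2)}))=f(x^{(1)}S(x^{(2)}))$. Then, using the compatibility $f\circ\a=\a'\circ f$ iterated $k$ times, I would push $\a'^k$ inside $f$:
\[
\a'^k\bigl(f(x^{(1)}S(x^{(2)}))\bigr)=f\bigl(\a^k(x^{(1)}S(x^{(2)}))\bigr).
\]
Now I choose $k$ to be the invertibility index of $S$ at $x$ provided by Definition~\ref{Hom-Hopf}(c), so that $\a^k(x^{(1)}S(x^{(2)}))=\varepsilon(x)1_H$. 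Applying $f$ (which is unital, hence sends $1_H$ to $1_A$) gives $\varepsilon(x)\,1_A=\eta'\circ\varepsilon(x)$, as required.

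The argument for $((f\circ S)\star f)(x)$ is entirely symmetric, using the other half of the relative Hom-invertibility identity $\a^k(S(x^{(1)})x^{(2)})=\varepsilon(x)1_H$ with the same $k$. I do not foresee a genuine obstacle here: the whole content is the interplay between $f$ being a Hom-algebra map (multiplicativity, unitality, commutation with $\a$) and the defining relative invertibility of $S$, and the index $k$ is inherited directly from the one guaranteed by the antipode axiom at the chosen element $x$. The only small point to watch is that $k$ is allowed to depend on $x$, which is exactly what ``relative Hom-inverse'' permits.
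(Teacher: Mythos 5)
Your proposal is correct and follows essentially the same route as the paper's own proof: use multiplicativity of $f$ to collapse the convolution into $f(x^{(1)}S(x^{(2)}))$, commute $\a'^k$ past $f$ via $f\circ\a=\a'\circ f$, invoke the antipode identity at the element $x$, and finish with unitality of $f$. The only difference is cosmetic: you are slightly more careful than the paper in distinguishing $\a'$ from $\a$ and in flagging that $k$ may depend on $x$.
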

  \begin{proof}
    We show that $f\circ S$ is a  relative Hom-inverse of $f$ in $ \Hom( H, A)$.
    For any  $h\in H$  there exist $k\in \mathbb{N}$ where  $\a^k(S(h^{(1)}) h^{(2)}) = \a^k (h^{(1)} S(h^{(2)}))= \varepsilon(x)1.$ Therefore
    \begin{align*}
      &\a^k( (f\circ S)\star f)(h) = \a^k (f( S(h^{(1)})) f(h^{(2)})) = \a^k(f(S(h^{(1)})h^{(2)})  )\\
      & = f (\a^k(S(h^{(1)})h^{(2)}))= f(\varepsilon(h)1)=\varepsilon(h)1.
    \end{align*}
    Similarly since $ \a^k (h^{(1)} S(h^{(2)}))= \varepsilon(x)1$ we have $\a^k(  f \star(f\circ S)  )(h)=\varepsilon(h)1$.
  \end{proof}
\begin{lemma}
  Let  $(H, m, \eta, \a,  \Delta, \varepsilon, \b, S )$ be a Hom-Hopf algebra,   $(C, \Delta', \varepsilon', \b'  )$ be a Hom-coalgebra and $f: C\longrightarrow H$ be a Hom-coalgebra map. Then  $ S\circ f$ is a  relative Hom-inverse of $f$ in $\Hom (C, H)$.

  \end{lemma}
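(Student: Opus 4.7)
The plan is to dualize the proof of the preceding lemma. Where that argument used the Hom-algebra map property of $f: H \to A$ to push the convolution through $f$ in the target, here I would use the Hom-coalgebra map property of $f: C \to H$ to push the convolution through $f$ in the source.

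Concretely, fix $c \in C$. Since $S$ is a relative Hom-inverse of $\Id_H$ in $\Hom(H,H)$, applied to the element $f(c) \in H$ there exists $k \in \mathbb{N}$ with
\begin{equation*}
\a^k\!\big(S(f(c)^{(1)})\, f(c)^{(2)}\big) \;=\; \a^k\!\big(f(c)^{(1)}\, S(f(c)^{(2)})\big) \;=\; \varepsilon(f(c))\, 1_H.
\end{equation*}
Now I would invoke the Hom-coalgebra map property of $f$, namely $\Delta \circ f = (f \otimes f) \circ \Delta'$, which in Sweedler notation reads $f(c)^{(1)} \otimes f(c)^{(2)} = f(c^{(1)}) \otimes f(c^{(2)})$, together with counit preservation $\varepsilon \circ f = \varepsilon'$. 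Substituting into the identity above yields
\begin{equation*}
\a^k\!\big(S(f(c^{(1)}))\, f(c^{(2)})\big) \;=\; \a^k\!\big((S\circ f)\star f\big)(c) \;=\; \varepsilon'(c)\, 1_H \;=\; \eta \circ \varepsilon'(c),
\end{equation*}
and analogously $\a^k\big(f \star (S\circ f)\big)(c) = \eta \circ \varepsilon'(c)$. This is precisely the relative Hom-invertibility condition for $S\circ f$ versus $f$ in the convolution Hom-algebra $\Hom(C,H)$ (whose unit is $\eta \circ \varepsilon'$ and whose twist acts as $\a$ on the target side).

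There is essentially no obstacle: the only subtlety is keeping track of which side the twist $\a^k$ acts on in $\Hom(C,H)$ (it acts on the target $H$, not the source $C$), and invoking the counit-preservation clause of a Hom-coalgebra morphism. The whole proof is two lines of Sweedler calculus once those conventions are fixed.
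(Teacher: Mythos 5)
Your proposal is correct and is exactly the dualization the paper intends: the paper's proof of this lemma is literally ``similar to the previous Lemma,'' and your argument (apply the antipode identity at $f(c)$, then use $f(c)^{(1)}\ot f(c)^{(2)}=f(c^{(1)})\ot f(c^{(2)})$ and $\varepsilon\circ f=\varepsilon'$ to identify the result with $\a^k\bigl((S\circ f)\star f\bigr)(c)=\eta\circ\varepsilon'(c)$) is the natural filling-in of that sketch. The only conventions you rely on (counit preservation by Hom-coalgebra maps, the twist acting via $\a$ on the target) are the same ones the paper uses implicitly in the preceding lemma.
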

  \begin{proof}
    Similar to the previous Lemma.
  \end{proof}

\begin{proposition}\label{Hopf map}
Let  $(H, m, \eta, \a,  \Delta, \varepsilon, \b, S)$ and $(K, m', \eta', \a',  \Delta', \varepsilon', \b', S')$  be $(\a, \b)$ and $(\a', \b')$-Hom-Hopf algebras.
If $f: H \longrightarrow K$ is a map of Hom-bialgebras then there exists $K\in \mathbb{N}$ such that

 \begin{align}\label{Hopf map condition}
   \a^K \circ ( f\circ S)\circ \b^2(h) = \a^K\circ ( S'\circ f)\b^2(h).
 \end{align}
\end{proposition}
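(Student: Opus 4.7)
The plan is to produce two different relative Hom-inverses of the Hom-bialgebra map $f$ inside the convolution Hom-algebra $\Hom(H,K)$, and then invoke Proposition \ref{important} to force them to agree up to the appropriate powers of $\a'$ and $\b$.

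First I would set up the relevant convolution Hom-algebra. Since $(H,\Delta,\varepsilon,\b)$ is a Hom-coalgebra and $(K,m',\eta',\a')$ is a Hom-algebra, the construction recalled just before Remark \ref{old hom-hopf} endows $\Hom(H,K)$ with the structure of a unital Hom-algebra under the convolution product $\star$, with twisting map $\gamma(\varphi)=\a'\circ\varphi\circ\b$ and unit $\gamma\circ(\eta'\circ\varepsilon)$. In this Hom-algebra, $f$ itself is a distinguished element.

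Next, I would exhibit the two candidate relative Hom-inverses of $f$. Because $f\colon H\to K$ is a Hom-bialgebra map, it is in particular a Hom-algebra map; applying the first of the two lemmas immediately preceding the proposition (with $A=K$) shows that $f\circ S$ is a relative Hom-inverse of $f$ in $\Hom(H,K)$. Symmetrically, $f$ is a Hom-coalgebra map, so the second lemma (with $C=H$ and the Hom-Hopf algebra taken to be $K$) shows that $S'\circ f$ is also a relative Hom-inverse of $f$ in $\Hom(H,K)$. Thus $f\circ S$ and $S'\circ f$ are two relative Hom-inverses of the same element $f$.

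Finally, I would apply Proposition \ref{important} to the convolution Hom-algebra $\Hom(H,K)$, taking $\varphi=f$ and the two relative Hom-inverses to be $f\circ S$ and $S'\circ f$. The conclusion of that proposition yields, for each $h\in H$, an integer $K\in\mathbb{N}$ with
\[
\a'^{\,K+2}\circ(f\circ S)\circ\b^{2}(h)=\a'^{\,K+2}\circ(S'\circ f)\circ\b^{2}(h),
\]
which is exactly \eqref{Hopf map condition} (with the exponent relabelled). There is no real obstacle here beyond bookkeeping: the whole argument reduces the statement to the essentially unique-up-to-twisting character of relative Hom-inverses established in Proposition \ref{important}, and the only thing to watch is that the twisting map on $\Hom(H,K)$ is $\gamma(\varphi)=\a'\circ\varphi\circ\b$, so that the powers of $\a'$ (on the target side) and of $\b$ (on the source side) appear exactly as in \eqref{Hopf map condition}.
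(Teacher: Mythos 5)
Your proposal is correct and follows exactly the paper's own argument: exhibit $f\circ S$ and $S'\circ f$ as two relative Hom-inverses of $f$ in the convolution Hom-algebra $\Hom(H,K)$ via the two preceding lemmas, then apply Proposition \ref{important}. The paper's proof is just a two-line version of the same reasoning, so there is nothing to add.
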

\begin{proof}

 By the previous Lemmas  $f \circ S$  and  $ S' \circ f $ are  the relative Hom-inverse of $f$ in $ \Hom( H, K)$. Then the  result is followed by Proposition \ref{important}.

\end{proof}
The condition \ref{Hopf map condition} is called the relative Hom-Hopf algebra map property of $S$.
As a special case of the previous Proposition we have the following result.

\begin{corollary}\label{hopf morphism}
 Let  $(H, m, \eta, \a,  \Delta, \varepsilon, \b, S)$ and $(K, m', \eta', \a',  \Delta', \varepsilon', \b', S')$  be Hom-Hopf algebras where $\a, \b, \a',\b'$ are invertible. Then any Hom-bialgebra map $f: H \longrightarrow K$ is a Hom-Hopf algebra map, i.e,
  $$f\circ S= S' \circ f.$$
\end{corollary}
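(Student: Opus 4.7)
The plan is to derive this immediately from Proposition \ref{Hopf map} by cancelling the twisting factors using their invertibility. Specifically, Proposition \ref{Hopf map} produces, for each $h \in H$, a natural number $K$ (depending on $h$) such that
\begin{equation*}
  \alpha'^{K} \circ (f \circ S) \circ \beta^{2}(h) \;=\; \alpha'^{K} \circ (S' \circ f) \circ \beta^{2}(h).
\end{equation*}
Since $\alpha'$ is invertible, its $K$-th iterate $\alpha'^{K}$ is also invertible in $\operatorname{End}(K)$, so we may apply $(\alpha'^{K})^{-1}$ on the left to both sides and obtain
\begin{equation*}
  (f \circ S) \circ \beta^{2}(h) \;=\; (S' \circ f) \circ \beta^{2}(h).
\end{equation*}
Note that although $K$ varies with $h$, this does not cause any trouble: the equation is pointwise and the cancellation is valid for each individual $h$.

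Next I would use invertibility of $\beta$: since $\beta^{2}$ is a bijection on $H$, every element $x \in H$ can be written uniquely as $x = \beta^{2}(h)$ with $h = \beta^{-2}(x)$. Substituting this into the previous equation yields
\begin{equation*}
  (f \circ S)(x) \;=\; (S' \circ f)(x) \qquad \text{for every } x \in H,
\end{equation*}
which is precisely the statement $f \circ S = S' \circ f$.

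There is no substantive obstacle in this argument: the work has already been absorbed into Proposition \ref{Hopf map}, and the corollary is essentially a matter of cancelling the two invertible operators $\alpha'^{K}$ and $\beta^{2}$ that the relative formulation of Hom-invertibility always introduces. The only small point worth being careful about is to invoke invertibility of $\alpha'$ (rather than of $\alpha$) when stripping the outer factor, since the common image of $f \circ S$ and $S' \circ f$ lies in $K$, not in $H$; this is why the hypothesis that both $\alpha$ and $\alpha'$ (and both $\beta$ and $\beta'$) are invertible is needed to cover the most general formulation, even though the immediate cancellation only uses $\alpha'$ and $\beta$.
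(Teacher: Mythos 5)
Your proof is correct and is exactly the argument the paper intends: the corollary is stated as an immediate special case of Proposition \ref{Hopf map}, obtained by cancelling the invertible operators $\alpha'^{K}$ and $\beta^{2}$ pointwise. Your remark that the outer cancellation uses invertibility of $\alpha'$ (the paper's displayed formula in Proposition \ref{Hopf map} writes $\a^K$ where $\a'^K$ is meant) is a correct and worthwhile clarification.
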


\begin{corollary}
   Let  $(H, m, \eta, \a,  \Delta, \varepsilon, \b, S)$ be Hom-Hopf algebra where $\a=\b$. Then there exists $k\in\mathbb{ N}$ such that

   \begin{equation}
     \a^k (\a\circ S) \a^2 (h) = \a^k( S\circ \a)\a^2 .
   \end{equation}
   If $\a$ is invertible then $ \a \circ S = S\circ \a$.
\end{corollary}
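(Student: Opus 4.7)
The plan is to recognize this corollary as a direct specialization of Proposition \ref{Hopf map} applied to the self-map $f = \a : H \longrightarrow H$. The key preliminary step is to verify that, under the hypothesis $\a = \b$, the twisting map $\a$ is itself a morphism of Hom-bialgebras from $H$ to $H$. This is exactly the observation already used in the earlier corollary (the one that passes from the old Hom-Hopf definition to the new one via $\a = \b$). Concretely, $\a$ is assumed to be multiplicative, $\a \circ \a = \a \circ \a$ is trivial, the coalgebra compatibility $\Delta \circ \a = (\a \ot \a) \circ \Delta$ is precisely the statement that $\b$ is a coalgebra morphism (which holds since $\b = \a$), and the intertwining $\b \circ \a = \a \circ \b$ is again trivial.

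With $f = \a$ established as a Hom-bialgebra endomorphism, apply Proposition \ref{Hopf map} with $K = H$, $S' = S$, $\a' = \a$, $\b' = \b$. The proposition produces some $k \in \mathbb{N}$ such that
$$\a^k \circ (\a \circ S) \circ \b^2(h) = \a^k \circ (S \circ \a) \circ \b^2(h).$$
Substituting $\a = \b$ on the right-hand factor yields the first claimed identity.

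For the second assertion, suppose $\a$ is invertible. Then $\a^k$ and $\a^2$ are invertible as well. Rewriting the identity just obtained as
$$\a^{k+1} \circ S \circ \a^2 = \a^k \circ S \circ \a^3,$$
one composes with $\a^{-k}$ on the left and $\a^{-2}$ on the right to conclude $\a \circ S = S \circ \a$.

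There is essentially no obstacle here: all the nontrivial work is absorbed into Proposition \ref{Hopf map}, and the only new content of the corollary is the recognition that $\a = \b$ turns $\a$ into a Hom-bialgebra self-morphism to which that proposition applies. The only point requiring a moment's care is the cancellation in the invertible case, which amounts to noting that $\a^k \circ (\a \circ S) \circ \a^2$ and $\a^k \circ (S \circ \a) \circ \a^2$ are each a conjugation of $\a \circ S$, resp.\ $S \circ \a$, by a bijection, so equality of the conjugates forces equality of the originals.
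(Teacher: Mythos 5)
Your proposal is correct and follows essentially the same route as the paper: the paper's proof is exactly the observation that $\a=\b$ makes $\a$ a morphism of Hom-bialgebras, followed by an appeal to Proposition \ref{Hopf map} with $f=\a$. Your additional explicit verification of the morphism conditions and the cancellation by $\a^{-k}$ and $\a^{-2}$ in the invertible case only spell out details the paper leaves implicit.
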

\begin{proof}
  Since $\a=\b$ then $\a$ is a map of Hom-bialgebras and the result is followed by Proposition \ref{Hopf map}.
\end{proof}

Here we summarize some of the results in this section.

\begin{theorem}
  Let  $(H, m, \eta, \a,  \Delta, \varepsilon, \b)$ be a Hom-bialgebra  where    $\a$  and $\b$ are  morphisms of algebra and   coalgebra, respectively. Assume $H$ is    endowed with a map $S: H\longrightarrow H$ where $S$ is a relative Hom-inverse of the identity map $\Id : H \longrightarrow H$  with respect to the the convolution product. Then $S$ is a relative Hom-anti-algebra map, a relative Hom-anti-coalgebra map,  relative Hom-unital, and relative Hom-counital, i.e, there exists $k\in\mathbb{ N}$ such that\\

 i) $\a^{k+2}( S(\b^2(x) \b^2(y))=  \a^{k+2} (S(\b^2(y)) S(\b^2(x))).$

 ii)$\a^{k+2}[ S(\b^2(x) )^{(1)}] \ot \a^{k+2}[ S(\b^2(x) )^{(2)}] =  \a^{k+2} [S(\b^2(x^{(2)}))]\ot \a^{k+2} [S(\b^2(x^{(1)}))].$

 iii) $\a^{k+1}( S(1))=1.$

 iv) $ \varepsilon (\a^k(S(h)))=1 \varepsilon(h)$.

 v) If $\a=\b$ then $\a^k (\a\circ S) \a^2 (h) = \a^k( S\circ \a)\a^2 .$\\

  Furthermore if $\a$ and $\b$ are invertible then $S$ is morphisms of algebras and coalgebras, respectively, and $\a \circ S= S \circ \a$.
\end{theorem}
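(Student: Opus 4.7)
The plan is to observe that this theorem is a consolidated restatement of results established individually in the preceding propositions and corollaries of this section, so the proof reduces to invoking those results and reconciling the various integers $k$ into a single common one. No fresh computation is needed beyond what was already done.

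Concretely, I would argue as follows. Part (i) is the relative Hom-anti-algebra identity \ref{relative hom-anti algebra} from Proposition \ref{relative anti algebra}, applicable because $\a$ is assumed multiplicative. Part (ii) is the relative Hom-anti-coalgebra identity \ref{relative hom-anti coalgebra} from Proposition \ref{relative anti coalgebra}, applicable because $\b$ is assumed a coalgebra map. Part (iii) is equation \ref{hom-unitality} of Proposition \ref{hom unitality prop}, and part (iv) is equation \ref{hom-counitality} of Proposition \ref{relative counitality}. Part (v) is the corollary of Proposition \ref{Hopf map} stated immediately above this theorem: when $\a = \b$, the twisting map $\a \colon H \to H$ is itself a map of Hom-bialgebras, and applying the relative Hom-Hopf-algebra-map property to $f = \a$ yields the desired commutation relation.

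Each cited result supplies its own integer $k_i \in \mathbb{N}$ depending on the element (or pair of elements) to which it is applied. To obtain a single $k$ that serves all of (i)--(v) simultaneously, I would set $k := \max(k_1,\ldots,k_5)$ and invoke the trivial fact that once $\a^{k_i}(u) = \a^{k_i}(v)$ holds, applying further powers of $\a$ preserves the equality, so each identity continues to hold with $\a^k$ in place of $\a^{k_i}$. This uniformization of $k$ is really the only piece of bookkeeping in the argument and constitutes the main (and essentially only) ``obstacle.'' For the concluding ``furthermore'' clause, when $\a$ and $\b$ are invertible I would cancel the outer $\a$-powers and inner $\b$-powers in (i), (ii), and (v), thereby upgrading the relative identities to the strict anti-algebra property $S(xy) = S(y)S(x)$, the strict anti-coalgebra property $\Delta \circ S = \tau \circ (S \otimes S) \circ \Delta$, and the commutation $\a \circ S = S \circ \a$; these are precisely the corollaries already recorded after Propositions \ref{relative anti algebra}, \ref{relative anti coalgebra}, and \ref{Hopf map}.
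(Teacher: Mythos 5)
Your proposal is correct and matches the paper's intent exactly: the theorem is explicitly introduced as a summary of the preceding results, and the paper supplies no separate proof, so assembling Propositions \ref{relative anti algebra}, \ref{relative anti coalgebra}, \ref{hom unitality prop}, \ref{relative counitality}, the corollary to Proposition \ref{Hopf map}, and the invertibility corollaries is precisely what is intended. Your observation that the several integers can be uniformized by taking their maximum (since an identity of the form $\a^{k_i}(u)=\a^{k_i}(v)$ persists under further applications of $\a$) is the only bookkeeping required, and you handle it correctly.
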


\begin{proposition}
Let  $(H, m, \eta, \a,  \Delta, \varepsilon, \b, S)$ be a commutative Hom-Hopf algebra.  Then for every $x\in H$ there exist $k\in \mathbb{N}$ where
$$\a^{k+2} \circ S^2\circ \b^2 (x)= \a^{k+2} \circ \Id  \circ \b^2(x).$$
\end{proposition}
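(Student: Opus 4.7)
The plan is to apply Proposition \ref{important} with $\varphi = S$, $f = \Id$, and $g = S^2$. For this I need to verify that both $\Id$ and $S^2$ are relative Hom-inverses of $S$ in the convolution Hom-algebra $\Hom(H,H)$; the conclusion then follows immediately from Proposition \ref{important}.

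That $\Id$ is a relative Hom-inverse of $S$ is just a restatement of Definition \ref{Hom-Hopf}(c), since the relative Hom-inverse relation is symmetric in $\Id$ and $S$. The nontrivial step is verifying that $S^2$ is also a relative Hom-inverse of $S$, that is, that for each $x\in H$ there is $k\in\mathbb{N}$ with
\begin{equation*}
 \a^k\bigl((S^2\star S)(x)\bigr) \;=\; \a^k\bigl((S\star S^2)(x)\bigr) \;=\; \varepsilon(x)\,1.
\end{equation*}
For the second equality I would compute
\begin{equation*}
 (S\star S^{2})(x) \;=\; S(x^{(1)})\,S^{2}(x^{(2)}),
\end{equation*}
then use commutativity of $H$ to rewrite this as $S^{2}(x^{(2)})\,S(x^{(1)})$, and finally apply the anti-algebra property of $S$ from Definition \ref{Hom-Hopf} (with $a=x^{(1)}$ and $b=S(x^{(2)})$) to collapse it to $S\!\bigl(x^{(1)}\,S(x^{(2)})\bigr)$. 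Applying $\a^{k}$ and using $S\circ\a=\a\circ S$ (Definition \ref{Hom-Hopf}(a)) to commute $\a^{k}$ past $S$, I can invoke the relative Hom-invertibility of $S$ with respect to $\Id$ to obtain
\begin{equation*}
 \a^{k}\bigl((S\star S^{2})(x)\bigr) \;=\; S\bigl(\a^{k}(x^{(1)}S(x^{(2)}))\bigr) \;=\; S(\varepsilon(x)\,1) \;=\; \varepsilon(x)\,S(1) \;=\; \varepsilon(x)\,1,
\end{equation*}
using that $S$ is unital. The computation for $\a^k\bigl((S^2\star S)(x)\bigr)$ is completely symmetric.

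Having established that both $\Id$ and $S^{2}$ are relative Hom-inverses of $S$, Proposition \ref{important} (applied with $A=H$ and $\varphi = S$) gives the desired equality
\begin{equation*}
 \a^{k+2}\circ S^{2}\circ\b^{2}(x) \;=\; \a^{k+2}\circ\Id\circ\b^{2}(x).
\end{equation*}
The only delicate point is bookkeeping on the index $k$: the $k$ produced by Proposition \ref{important} is not the same as the $k$ used in showing $S^2$ is a relative Hom-inverse, but by taking the maximum of the two, one obtains a single $k\in\mathbb{N}$ that works for the final statement. I expect no serious obstacle, since all the ingredients (commutativity, anti-algebra property of $S$, and $S\circ\a=\a\circ S$) are directly available from Definition \ref{Hom-Hopf} and the commutativity hypothesis.
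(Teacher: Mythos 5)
Your proposal is correct and follows essentially the same route as the paper: show that both $\Id$ and $S^2$ are relative Hom-inverses of $S$ (using commutativity, the anti-algebra property of $S$, $S\circ\a=\a\circ S$, and unitality of $S$) and then invoke Proposition \ref{important}. The only cosmetic difference is the order in which you apply commutativity and the anti-algebra identity to collapse $S(x^{(1)})S^2(x^{(2)})$ to $S\bigl(x^{(1)}S(x^{(2)})\bigr)$; the paper does the same collapse in the reverse order.
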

\begin{proof}
We show that $S^2$ is a relative Hom-inverse of $S$. For any  $h\in H$  there exist $k\in \mathbb{N}$ where  $\a^k(S(h^{(1)}) h^{(2)}) = \a^k (h^{(1)} S(h^{(2)}))= \varepsilon(x)1.$ Therefore
\begin{align*}
   \a^k(S\star S^2)(h)= & \a^k [S(h^{(1)}) S^2(h^{(2)})] \\
   &= \a^k [S[S(h^{(2)}) h^{(1)} ]]  = \a^k [S[ h^{(1)}S(h^{(2)}) ]]\\
   &=  S  [\a^k[ h^{(1)}S(h^{(2)})] ]= S(\varepsilon(h) 1)= \varepsilon(h)1.
\end{align*}
We used the anti-algebra map property of $S$ in the second equality, commutativity of $H$ in the third equality, commutativity of $S$ and $\a$ in the fourth equality, and the unitality of $S$ in the last equality. Similarly it can be shown that $\a^k (S^2 \star S)(h)= \varepsilon(h)1$. Therefore  $S^2$  and the identity map $\Id_H$ are both relative Hom-inverse of  $S$. Now the result is followed by  Proposition \ref{important}.
\end{proof}
As a special case of previous Proposition, we have the following result.
\begin{corollary}
   If  $(H, m, \eta, \a,  \Delta, \varepsilon, \b, S)$ is a commutative Hom-Hopf algebra with invertible $\a$ and $\b$ then

   $$S^2=\Id.$$
\end{corollary}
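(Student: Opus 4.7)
The plan is to derive the corollary as an immediate cancellation consequence of the preceding proposition. By that proposition, for every $x \in H$ there exists $k = k(x) \in \mathbb{N}$ such that
$$\a^{k+2} \circ S^2 \circ \b^2(x) = \a^{k+2} \circ \b^2(x).$$
The goal is to remove $\a^{k+2}$ on the left and $\b^2$ on the right to obtain $S^2(x) = x$.

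First, I would observe that the hypothesis that $\a$ and $\b$ are invertible as linear maps $H \to H$ immediately implies that $\a^{k+2}$ and $\b^2$ are invertible linear endomorphisms of $H$ for every $k \in \mathbb{N}$, with inverses $(\a^{-1})^{k+2}$ and $(\b^{-1})^2$ respectively. Next, applying $(\a^{-1})^{k+2}$ on the left of both sides of the displayed equation yields
$$S^2 \circ \b^2(x) = \b^2(x).$$
Then, since every $y \in H$ can be written uniquely as $y = \b^2(x)$ for $x = (\b^{-1})^2(y)$, the equation above shows $S^2(y) = y$ for all $y \in H$, i.e.\ $S^2 = \Id$.

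There is no substantive obstacle here, since the dependence of $k$ on $x$ is harmless: the cancellation is carried out pointwise in $x$, and after cancellation the resulting identity $S^2(x) = x$ no longer involves $k$. The only thing to be careful about is that $\a$ and $\b$ are assumed invertible as linear maps (not merely as algebra/coalgebra morphisms), which is exactly what is used to invert $\a^{k+2}$ and $\b^2$ on $H$.
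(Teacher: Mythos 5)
Your proposal is correct and matches the paper's intent: the paper derives this corollary directly from the preceding proposition by cancelling the invertible maps $\a^{k+2}$ and $\b^2$, exactly as you do (the paper simply leaves this cancellation implicit with the phrase ``as a special case''). Your explicit remarks on the pointwise nature of $k$ and the surjectivity of $\b^2$ fill in the details without deviating from the paper's route.
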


\begin{proposition}
Let  $(H, m, \eta, \a,  \Delta, \varepsilon, \b, S)$ be a cocommutative Hom-Hopf algebra.  Then for every $x\in H$ there exist $k\in \mathbb{N}$ where
$$\a^{k+2} \circ S^2\circ \b^2 (x)= \a^{k+2} \circ \Id  \circ \b^2(x).$$
\end{proposition}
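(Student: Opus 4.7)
The plan is to mirror the commutative case almost verbatim, using cocommutativity in place of commutativity. Specifically, I would show that $S^2$ is a relative Hom-inverse of $S$ with respect to the convolution product in $\Hom(H,H)$; the identity map $\Id_H$ is itself a relative Hom-inverse of $S$ (this is just the antipode axiom read the other way), so once $S^2$ is seen to be one as well, both $S^2$ and $\Id_H$ are relative Hom-inverses of the same map $S$, and Proposition \ref{important} (applied with $\varphi = S$, $f = S^2$, $g = \Id_H$) immediately yields
$$\a^{k+2} \circ S^2 \circ \b^2(x) = \a^{k+2} \circ \Id \circ \b^2(x)$$
for every $x \in H$.

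To verify $S^2$ is a relative Hom-inverse of $S$, fix $h \in H$ and pick $k \in \mathbb{N}$ with $\a^k(S(h^{(1)}) h^{(2)}) = \a^k(h^{(1)} S(h^{(2)})) = \varepsilon(h) 1$. For $S \star S^2$ I would compute
$$\a^k(S \star S^2)(h) = \a^k[S(h^{(1)}) S^2(h^{(2)})] = \a^k S[S(h^{(2)}) h^{(1)}]$$
using the anti-algebra property of $S$. Cocommutativity of $H$ gives $h^{(1)} \ot h^{(2)} = h^{(2)} \ot h^{(1)}$, so applying the bilinear map $(x,y) \mapsto S(x) y$ to both sides yields $S(h^{(2)}) h^{(1)} = S(h^{(1)}) h^{(2)}$. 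Pulling $\a^k$ through $S$ via $S \circ \a = \a \circ S$ and then invoking the antipode axiom together with $S(1) = 1$, the expression collapses to $S(\varepsilon(h) 1) = \varepsilon(h) 1$. The computation for $\a^k(S^2 \star S)(h)$ runs symmetrically: anti-algebra converts $S^2(h^{(1)}) S(h^{(2)})$ into $S(h^{(2)} S(h^{(1)}))$, cocommutativity rewrites the argument as $h^{(1)} S(h^{(2)})$, and the antipode axiom finishes the calculation.

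The only step that requires real attention is the use of cocommutativity \emph{inside} the argument of $S$: once anti-algebra has folded the product into a single application of $S$, the Sweedler swap $h^{(1)} \ot h^{(2)} = h^{(2)} \ot h^{(1)}$ can still be used through the bilinear slot, which is what lets us trade $S(h^{(2)}) h^{(1)}$ for $S(h^{(1)}) h^{(2)}$. With that observation in hand, the argument is a direct transcription of the commutative proof just given, with the order of anti-algebra and the cocommutativity swap reversed in spirit, and with Proposition \ref{important} delivering the final conclusion.
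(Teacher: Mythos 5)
Your proposal is correct, and its skeleton is exactly the paper's: show that $S^2$ and $\Id_H$ are both relative Hom-inverses of $S$ in $\Hom(H,H)$ and invoke Proposition \ref{important}. The difference lies entirely in how you verify $\a^k(S\star S^2)(h)=\varepsilon(h)1$. You transpose the paper's \emph{commutative}-case computation: apply the anti-algebra property to get $S(h^{(1)})S^2(h^{(2)})=S\bigl(S(h^{(2)})h^{(1)}\bigr)$, use cocommutativity of $\Delta(h)$ through the bilinear slot to trade $S(h^{(2)})h^{(1)}$ for $S(h^{(1)})h^{(2)}$ inside the argument of $S$, then pull $\a^k$ through $S$ via $S\circ\a=\a\circ S$ and finish with the antipode axiom at $h$ and $S(1)=1$. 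The paper instead uses the anti-coalgebra property to rewrite $S(h^{(1)})\ot S^2(h^{(2)})$ as $S(h)^{(2)}\ot S(S(h)^{(1)})$, applies cocommutativity to $\Delta(S(h))$, and then invokes the antipode axiom at the element $S(h)$ together with counitality $\varepsilon\circ S=\varepsilon$; this is why the paper must take $k=\max(k',k'')$ with $k''$ chosen for $S(h)$. Both arguments are legitimate since all the properties used (anti-algebra, anti-coalgebra, $S\circ\a=\a\circ S$, $S(1)=1$, $\varepsilon\circ S=\varepsilon$) are part of Definition \ref{Hom-Hopf}. Your route is marginally cleaner in that it only needs the antipode identity at $h$ itself and makes the commutative and cocommutative cases visibly parallel; the paper's route is the more natural ``dual'' argument in that it uses the coalgebra-side structure throughout and would survive in a setting where $S\circ\a=\a\circ S$ were not assumed.
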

\begin{proof}
We show that $S^2$ is a relative Hom-inverse of $S$. For any  $h\in H$  there exist $k', k''\in \mathbb{N}$ where  $\a^{k'}(S(h^{(1)}) h^{(2)}) = \a^{k'} (h^{(1)} S(h^{(2)}))= \varepsilon(x)1,$ and $\a^{k''}(S(S(h)^{(1)}) h^{(2)}) = \a^{k''} (h^{(1)} S(S(h)^{(2)}))= \varepsilon(x)1.$ Let $k=\max (k', k'')$.

Therefore
\begin{align*}
   \a^k(S\star S^2)(h)= & \a^k [S(h^{(1)}) S^2(h^{(2)})] \\
   &= \a^k [S(h)^{(2)} S (S(h)^{(1)} )]  = \a^k [S(h)^{(1)} S ( S(h)^{(2)}) ]\\
   &= \a^k(\varepsilon(S(h))1)= \varepsilon(S(h))\a^k(1)= \varepsilon(h) 1.
\end{align*}
We used the anti-coalgebra map property of $S$ in the second equality, cocommutativity of $H$ in the third equality, and the counitality of antipode in fifth equality. Similarly $\a^k (S^2 \star S))(h)= \varepsilon(h) 1$. Therefore  $S^2$ and the identity map $\Id_H$ are both Hom-relative inverse of  $S$. Then the result is followed by  Proposition \ref{important}.
\end{proof}
As a special case of the previous Proposition we have the following result.

\begin{corollary}
   If  $(H, m, \eta, \a,  \Delta, \varepsilon, \b, S)$ is a cocommutative Hom-Hopf algebra with invertible $\a$ and $\b$ then

   $$S^2=\Id.$$
\end{corollary}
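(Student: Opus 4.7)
The plan is to deduce this corollary directly from the immediately preceding Proposition, which already establishes that for every $x \in H$ there exists $k \in \mathbb{N}$ with
\[
\a^{k+2} \circ S^2 \circ \b^2(x) \;=\; \a^{k+2} \circ \b^2(x).
\]
Since we are now assuming $\a$ and $\b$ are invertible as linear maps on $H$, the goal is simply to strip these twisting maps off both sides.

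First I would fix an arbitrary $x \in H$ and, applying the previous Proposition to the element $\b^{-2}(x)$ (which is well-defined by invertibility of $\b$), obtain some $k = k(x) \in \mathbb{N}$ for which
\[
\a^{k+2}\bigl(S^2(x)\bigr) \;=\; \a^{k+2}(x).
\]
Then, since $\a$ is invertible, $\a^{k+2}$ is a bijection, so cancellation yields $S^2(x) = x$. Because $x$ was arbitrary, this gives $S^2 = \Id$ as maps $H \to H$.

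There is no real obstacle here; the only thing to be slightly careful about is that the integer $k$ depends on the element $x$, so one cannot conclude the identity $\a^{k+2} \circ S^2 = \a^{k+2}$ uniformly in $k$. However, since the cancellation of $\a^{k+2}$ is performed pointwise for each $x$ individually using invertibility of $\a$, this dependence is harmless, and the final conclusion $S^2(x) = x$ is a genuine pointwise (hence global) equality. This is exactly analogous to how the commutative case was handled in the previous corollary, and I would note the parallel rather than repeat the argument in detail.
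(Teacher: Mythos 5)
Your proof is correct and is exactly the argument the paper intends: the corollary is stated as an immediate special case of the preceding Proposition, obtained by cancelling the invertible maps $\a^{k+2}$ and $\b^2$ pointwise. Your remark that the dependence of $k$ on $x$ is harmless because the cancellation is done element by element is a worthwhile clarification, but the route is the same as the paper's.
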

\begin{proposition}
  Let  $(H, m, \eta, \a,  \Delta, \varepsilon, \b, S)$ be a Hom-Hopf algebra and $h\in H$ a primitive element, i.e, $\Delta(h)= 1\ot h+ h\ot 1$. Then there exists $k\in \mathbb{N}$ where
  \begin{equation}
    \a^{k+1} (S(h))= -\a^{k+1} (h).
  \end{equation}
\end{proposition}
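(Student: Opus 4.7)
The plan is to apply the relative Hom-inverse axiom of Definition~\ref{Hom-Hopf}(c) directly to the primitive element $h$, using the fact that the antipode of a Hom-Hopf algebra already satisfies $S(1)=1$ via Definition~\ref{Hom-Hopf}(b). The target identity will then fall out of a short calculation once we also verify that $\varepsilon(h)=0$ for primitive $h$, which is the Hom-analogue of the familiar fact for ordinary bialgebras.

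Explicitly, by Definition~\ref{Hom-Hopf}(c) there exists $k\in\mathbb{N}$ with
$$\a^k\bigl((\Id\star S)(h)\bigr)=\varepsilon(h)\cdot 1.$$
Substituting $\Delta(h)=1\ot h+h\ot 1$, the left-hand side unfolds as $\a^k\bigl(1\cdot S(h)+h\cdot S(1)\bigr)$. The Hom-unit axioms give $1\cdot S(h)=\a(S(h))$ and $h\cdot 1=\a(h)$, and Definition~\ref{Hom-Hopf}(b) supplies $S(1)=1$, so the expression collapses to
$$\a^{k+1}(S(h))+\a^{k+1}(h).$$

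For the right-hand side I would show $\varepsilon(h)=0$ as follows: apply the counit axiom $(\varepsilon\ot\Id)\circ\Delta=\b$ to $h$ to obtain $h+\varepsilon(h)\cdot 1=\b(h)$, then apply $\varepsilon$ and use $\varepsilon\circ\b=\varepsilon$ together with $\varepsilon(1)=1$ to get $\varepsilon(h)=2\varepsilon(h)$, hence $\varepsilon(h)=0$. Combining the two sides yields $\a^{k+1}(S(h))+\a^{k+1}(h)=0$, which is the claim. There is no real obstacle: the argument is a direct transcription of the classical bialgebra proof, relying only on the Hom-unit axiom, the identity $S(1)=1$ built into Definition~\ref{Hom-Hopf}, and the counitality $\varepsilon\circ\b=\varepsilon$. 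Only one invertibility index appears, namely the $k$ produced by Definition~\ref{Hom-Hopf}(c) at the given $h$, so no maximum over several indices needs to be taken and no multiplicativity hypothesis on $\a$ is required.
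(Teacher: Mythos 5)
Your proof is correct and follows essentially the same route as the paper: apply the relative Hom-inverse condition $\a^k(h^{(1)}S(h^{(2)}))=\varepsilon(h)1$ to the primitive element, use $S(1)=1$ from Definition~\ref{Hom-Hopf}(b) and the Hom-unit identity $1x=x1=\a(x)$ to reduce the left side to $\a^{k+1}(S(h))+\a^{k+1}(h)$, and conclude via $\varepsilon(h)=0$. The only difference is that you derive $\varepsilon(h)=0$ directly from the counit axiom, whereas the paper cites it from the literature; both are fine.
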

\begin{proof} There exists $k\in \mathbb{N}$ such that  $\a^{k}(S(h^{(1)}) h^{(2)}) = \a^{k'} (h^{(1)} S(h^{(2)}))= \varepsilon(x)1.$ Therefore

$$\a^k( hS(1)) + \a^k(1S(h))=\varepsilon(h)1.$$ Since $S$ is unital and for any $x\in H$, we have $1x=\a(x)$, then
$$\a^{k+1}(h) + \a^{k+1}(h) = \epsilon(h)1.$$

By \cite{lmt},   for any primitive element $h$, we have $\varepsilon(h)=0$. Therefore $\a^{k+1} (S(h))= -\a^{k+1} (h).$
\end{proof}

\begin{proposition}
  Let  $(H, m, \eta, \a,  \Delta, \varepsilon, \b, S)$ be a Hom-Hopf algebra and $h\in H$ a group-like element, i.e, $\Delta(h)= h\ot h$. Then there exists $k\in \mathbb{ N}$ where
  \begin{equation}
    \a^{k} (S(h)h)=  \a^k ( h S(h))=1.
  \end{equation}
\end{proposition}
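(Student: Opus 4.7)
The plan is to apply the defining property of the antipode (Definition \ref{Hom-Hopf}(c)) directly to the group-like element $h$ and then use the fact that $\varepsilon(h) = 1$ for group-like elements.

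First I would invoke the relative Hom-inverse condition on $S$: for the given $h \in H$ there exists $k \in \mathbb{N}$ such that
\begin{equation*}
\a^k\bigl(S(h^{(1)})\, h^{(2)}\bigr) = \a^k\bigl(h^{(1)}\, S(h^{(2)})\bigr) = \varepsilon(h)\, 1.
\end{equation*}
Next, since $h$ is group-like we have $\Delta(h) = h \otimes h$, so $h^{(1)} = h^{(2)} = h$ (in Sweedler notation). Substituting this directly into the identity above yields
\begin{equation*}
\a^k\bigl(S(h)\, h\bigr) = \a^k\bigl(h\, S(h)\bigr) = \varepsilon(h)\, 1.
\end{equation*}

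Finally, I would recall the remark made just after the definition of group-like elements: if $h$ is group-like then $\varepsilon(h) h = \b(h) = h$, which forces $\varepsilon(h) = 1$. Combining this with the displayed equation gives the desired conclusion $\a^k(S(h) h) = \a^k(h S(h)) = 1$.

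There is essentially no obstacle here; the statement is an immediate specialization of the definition of the antipode to an element whose coproduct is trivial, together with the already-established fact that group-like elements are counital. The proof occupies a single line of computation and does not require invoking Proposition \ref{important} or any of the anti-algebra / anti-coalgebra properties developed earlier.
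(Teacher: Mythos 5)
Your proposal is correct and follows exactly the paper's own argument: apply the relative Hom-inverse condition of $S$ to $h$, substitute $h^{(1)} = h^{(2)} = h$ from the group-like coproduct, and conclude with $\varepsilon(h) = 1$. No further comment is needed.
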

\begin{proof}
  There exists $k\in \mathbb{N}$ such that  $\a^{k}(S(h^{(1)}) h^{(2)}) = \a^{k'} (h^{(1)} S(h^{(2)}))= \varepsilon(x)1.$ Then

  $$\a^k(S(h)h)= \a^k( hS(h))=\varepsilon(h)1.$$
  Now the result is followed by the fact that $\varepsilon(h)=1$.
\end{proof}
By previous Proposition  the relative Home inverse of a group-like element $h$ is $S(h)$.
%%%%%%%%%%%%%%%%%%%%%%%%%%%%%%%%%%%%%%%%%%%%%%%%%%%%%%%%%%%%%%%%%%%%%%%%%%%%%%%%%%%%%%%%%%%%%%%%%%%%%%%%%%%%%%%%%%%%%%%%%%%%%%%%%%%%%%%%%%%%
%%%%%%%%%%%%%%%%%%%%%%%%%%%%%%%%%%%%%%%%%%%%%%%%%%%%%%%%%%%%%%%%%%%%%%%%%%%%%%%%%%%%%%%%%%%%%%%%%%%%%%%

\end{document}